\title{Semidiscrete Finite Element Analysis of Time Fractional Parabolic Problems: A Unified Approach}
\author{Samir Karaa\footnotemark[2]
%\and Kassem Mustapha\footnotemark[2]
%\and Amiya Pani\footnotemark[3]
}
\newtheorem{remark}{Remark}[section]
\newcommand{\Ba}{\partial_t^{\alpha}}
\newcommand{\RL}{^{R}\partial_t^{\alpha}}
\newcommand{\I}{\mathcal{I}}
\newcommand{\cL}{\mathcal{L}}
\newcommand{\bv}{{\bf w}}
\newcommand{\bs}{{\boldsymbol\sigma}}
\newcommand{\bz}{{\boldsymbol\zeta}}
\newcommand{\bx}{{\boldsymbol\xi}}
\newcommand{\bV}{{\bf W}}
\newcommand{\bH}{{\bf H}}
\begin{document}
\date{}
\maketitle

\maketitle

\renewcommand{\thefootnote}{\fnsymbol{footnote}}
\footnotetext[2]{Department of Mathematics and Statistics, Sultan Qaboos University,
 P. O. Box 36, Al-Khod 123, Muscat, Oman (skaraa@squ.edu.om). This research is supported by the Research Council of Oman grant RC/SCI/DOMS/16/01. It was submitted for publication June 13, 2017.}
\renewcommand{\thefootnote}{\arabic{footnote}}

\begin{abstract}
In this paper, we consider the  numerical approximation of time-fractional parabolic problems involving Caputo derivatives in time of order $\alpha$, $0< \alpha<1$. We derive optimal error estimates for semidiscrete  Galerkin FE type approximations for problems with smooth and nonsmooth initial data.  Our analysis relies on energy arguments and exploits the properties of the inverse of the associated elliptic operator. We present the analysis in a general setting so that it is easily applicable to various spatial approximations such as conforming and nonconforming FEMs,  and FEM on  nonconvex domains. The finite element approximation in mixed form is also presented and  new error estimates are established for  smooth and nonsmooth initial data. Finally, an extension of our analysis to a multi-term time-fractional model is discussed.
\end{abstract}
\begin{keywords}
time-fractional parabolic equation, multi-term fractional diffusion, semidiscrete finite element scheme, optimal error estimates, 
 mixed method, nonsmooth initial data
\end{keywords}

\begin{AMS}
 65M60, 65M12, 65M15
 \end{AMS}

\section{Introduction}
%\se

Let $\Omega$ be a bounded, convex polygonal domain in $\mathbb{R}^2$  with
boundary $\partial \Omega$ and let $T>0$ be a fixed value.  We are interested in the numerical approximation of the solution $u(x,t)$
of the following time-fractional  initial-boundary value problem: 
\begin{equation}\label{a}
\Ba u +\cL u=f\; \mbox{ in } \Omega\times (0,T],\quad u(0)=u_0\; \mbox{ in }  \Omega,\quad u=0 
\; \mbox{ on } \partial \Omega\times (0,T],
\end{equation}
where 
$
\cL u = -\mbox{div} [A(x)\nabla u]+c(x)u,
$
$f(x,t)$ is the forcing function and $u_0(x)$ is the initial data. 
Here, $A(x)=[{a_{ij}(x)}]_{1\leq i,j\leq 2}$ is a  $2\times 2$ symmetric and  uniformly positive definite 
in $\Omega$ matrix with smooth coefficients, and   $c(x)\in L^\infty(\Omega)$ is nonnegative in $\Omega$. 
In \eqref{a}, $\Ba$ denotes the
Caputo fractional derivative of order $\alpha$ ($0<\alpha<1$) with respect to $t$  defined by
\begin{equation} \label{Ba}
\Ba\varphi(t):=\I^{1-\alpha}\varphi'(t):=\int_0^t\omega_{1-\alpha}(t-s)\varphi'(s)\,ds\quad\text{with} \quad
\omega_{\alpha}(t):=\frac{t^{\alpha-1}}{\Gamma(\alpha)},
\end{equation}
where $\varphi'$ is the time derivative of $\varphi$ and $\I^{\nu}$ is the Riemann--Liouville time-fractional integral of order $\nu$. As $\alpha \rightarrow 1^-$,  $\Ba$  converges
to $u'$, and thus, problem \eqref{a} reduces to the standard parabolic problem. 
In analogy with Brownian motion of normal diffusion, the equation in \eqref{a} with $0<\alpha<1$ represents a macroscopic counterpart of time continuous random  walk \cite{GMMP-2002,MW-1965}.

In recent years, the model \eqref{a}  has received considerable attention, due to its great efficiency in capturing
%excellent capacity of  capturing 
 the dynamics of physical processes involving  anomalous transport phenomena.
 %involving  an anomalous transport mechanism. 
Several numerical schemes   have then been proposed with different types of spatial discretizations 
including finite difference, FE or spectral element methods, see \cite{CuestaLubichPalencia2006, ChenLiuAnhTurner2012,Cui2012,ZS2011, KMP2015, JLPZ2015, MustaphaMcLean2011,LML-2017}, and most recently, the finite volume element method \cite{KMP2015,KP-2017}. 
%    For the time discretization,
%  different time-stepping schemes (implicit and explicit) have been investigated including finite difference, %convolution quadrature,
%  and discontinuous Galerkin methods, see \cite{ChenLiuAnhTurner2012, CuestaLubichPalencia2006, Cui2012, Mustapha2011, %MustaphaMcLean2011, RZ2013, ZS2011}.

The main technical difficulty in designing robust numerical schemes and in carrying out a rigorous error analysis stems from the limited smoothing properties of the problem.  Specifically, for an initial data $u_0\in L^2(\Omega)$ and $f=0$, the following estimate  \cite{SY-2011}:
$$
\|\Ba u(t)\|_{ L^2(\Omega)}\leq Ct^{-\alpha}\|u_0\|_{ L^2(\Omega)},\qquad t>0,
$$
shows the singular behaviour of the solution $u$ near $t=0$.  Assuming high regularity on $u$ imposes additional  conditions on the given data $u_0$ and $f$, which are not, in general, reasonable. 
%Another difficulty is due to that fact that 
Note also that in the case of fractional-order evolution problems, 
the solution operators do not form a semigroup, as in the parabolic case,  so some useful techniques cannot be utilized. Attempts have then been made, using various techniques, including, spectral decomposition approach, Laplace transforms with the semigroup type theory, and novel energy arguments, to derive sharp error estimates for problem \eqref{a} under reasonable assumptions on the solution $u$.

%For instance,  a direct application of energy arguments to problem \eqref{a} does not lead to optimal convergence rates %even when the initial data $u_0 \in  H^2(\Omega)$.

Early papers dealing with optimal (with respect to data regularity) error estimates for time-fractional order problems consider the following subdiffusion equation
\begin{equation}\label{a1-n}
u_t(x,t)- {\RL}\Delta u(x,t)=f(x,t),
\end{equation}
which is closely related to but different from the model in \eqref{a}. Here, $\RL$ is the Riemann-Liouville fractional derivative in time defined by $\RL\varphi(t):=\frac{d}{dt}\I^{1-\alpha}\varphi(t)$.
In \cite{MT2010}, McLean and Thom\'ee 
established the first optimal $L^2(\Omega)$-error estimates for the  Galerkin FE solution of \eqref{a1-n}
with respect to the regularity of initial data using Laplace transform technique. Thus, they extended the  classical results obtained in \cite {BSTW-1977} for the standard parabolic problem.
% which has been thoroughly studied, see  \cite{thomee1997}. 
In \cite{McLeanThomee2010}, the  authors derived convergence rates  in the stronger $L^\infty(\Omega)$-norm. Recently, a delicate energy analysis has been developed in \cite{KMP2016}  to obtain similar estimates.

In recent papers \cite{JLZ2013,JLPZ2015,JLZ2016}, Jin {et al.} established optimal error estimates for the subdiffusion problem  \eqref{a}, with respect to the solution smoothness expressed through the problem data,
$f$ and $u_0$. In \cite{JLZ2013}, an approach based on Laplace transform and eigenfunction expansion  of the solution  has been exploited to derive {\it a priori} error estimates for the semidiscrete FEM applied to \eqref{a} with $f=0$.  
The  semidiscrete FEM for the inhomogeneous  equation with  a  weak right-hand side data $f$ has been considered in \cite{JLPZ2015}. In \cite{JLZ2016}, fully discrete schemes based on convolution quadrature in time  are derived and analyzed   for problems with smooth and nonsmooth data. 
%In \cite{JLZ2016}, a different approach in space has been proposed to improve some of the estimates establishes in %\cite{JLZ2013}.

%motivation

The first motivation of this work is to derive optimal error estimates for semidiscrete  Galerkin FE type approximations  to the problem  \eqref{a} on convex and nonconvex domains with both smooth and nonsmooth initial data, using energy arguments combined with a technique developed in \cite{BSTW-1977}, which is based on the inverse of the associated elliptic operator. We shall present our method in a general setting so that it can be extended to various discretizations in space, and can be easily adopted to different time-fractional problems. Thereby, we extend known results of the  parabolic case  to the fractional-order case $(0< \alpha<1)$. Our analysis depends on known properties of the associated elliptic problems, in contrast to the standard Laplace transform technique which relies on writing the corresponding semidiscrete problem in operator form. This procedure is not always feasible, e.g., in the analysis of the mixed form of problem \eqref{a}, and  can be complicated in other cases, such as, in the case of nonconforming FE approximations.
%specifically

% The method allows, in particular, the derivation of error estimates in various norms.
%and for different domains.
%Due to the  limited smoothing property of the solution $u$, a repeated use of the integral operator like %$\I\phi(t):=\int_{0}^t \phi(s)\;ds$ (see \cite{GP2011,GPY2014}) along with $t^m$ type weights is an essential tool to %provide optimal error  estimates. 

The second aim is to investigate a mixed form of  problem \eqref{a}, and derive optimal error estimates for 
the semidiscrete problem, using a standard Galerkin mixed FE method in space, for cases with  smooth and nonsmooth initial data. To the best of our knowledge, there is hardly any result for the mixed form of \eqref{a} except for  a recent paper \cite{2017}. In \cite{2017}, a non-standard mixed FE method is proposed and analyzed assuming higher order regularity  on the solution. Another related analysis for mixed method applied to the time-fractional Navier-Stokes equations is presented in \cite{XYZ-2017} where  high regularity assumptions on the exact solution are also made.

The rest of the paper is organized as follows. In Section \ref{sec:WRT}, we recall regularity properties of the solution $u$, and  state  some technical results.
In Section \ref{sec:AB}, we present our error analysis for the initial-boundary value problem \eqref{a}.  In Section \ref{sec:Appli},  applications are presented and optimal $L^2(\Omega)$-error estimates are established. 
%for cases with smooth and nonsmooth initial data. 
The applications include the standard $C^0$-conforming FE method defined on  convex and nonconvex domains, and some nonconforming methods. 
%we apply  our analysis to 
%a semidiscrete Galerkin  FE scheme for solving problem \eqref{a} on a convex polygonal domain, and 
%derive optimal $L^2(\Omega)$-error estimates for  smooth and nonsmooth initial data.
%Similar error estimates are also established  for nonconvex polygonal domains. 
In section \ref{sec:Mixed}, we introduce the mixed form of problem \eqref{a} and derive new error estimates for cases with smooth and nonsmooth data. Particularly relevant to this {\it a priori} error analysis is the appropriate use of several  properties of the time-fractional differential operator. Finally, in Section \ref{sec:multi},  we  discuss the extension of our analysis to a multi-term time-fractional model. 
%We show that the error estimates achieved for  problem \eqref{a} remain  valid for this model.

%%%%%%%%%%%%%%%%%%%%%%%%%%%%%%%%%%%%%%%%%%%%%%%%%%%%%%%%%%%%%%%%%%%%%%%%%%%%%%%%%%%%%%%
%%%%%%%%%%%%%%%%%%%%%%%%%%%%%%%%%%%%%%%%%%%%%%%%%%%%%%%%%%%%%%%%%%%%%%%%%%%%%%%%%%%%%%%
%%%%%%%%%%%%%%%%%%%%%%%%%%%%%%%%%%%%%%%%%%%%%%%%%%%%%%%%%%%%%%%%%%%%%%%%%%%%%%%%%%%%%%%

\section {Notation and  Preliminaries}\label{sec:WRT} 
%\section {Notation and technical results}\label{sec:WRT} 
%\se

We shall first introduce  notation and recall some preliminary results. Let $(\cdot,\cdot)$ be the inner product in $L^2(\Omega)$ and $\|\cdot\|$  the induced norm. 
The space $H^m(\Omega)$ denotes the standard Sobolev space with the usual norm $\|\cdot\|_{H^m(\Omega)}$.
%--------------------------------------------------------
%\subsection{The dotted spaces}
Let  $\{\lambda_j\}_{j=1}^\infty$ and $\{\phi_j\}_{j=1}^\infty$ denote respectively  the Dirichlet eigenvalues and 
eigenfunctions of  the symmetric and uniformly elliptic operator $\cL$ on the domain $\Omega$, with $\{\phi_j\}_{j=1}^\infty$ being an  orthonormal basis in $L^2(\Omega)$. For $r\geq 0$, we define the Hilbert space
$$
\dot H^r(\Omega)=\left\{v\in L^2(\Omega):\; \sum_{j=1}^\infty \lambda_j^r (v,\phi_j)^2<\infty\right\}
$$
equipped with the norm
$$
\|v\|_{\dot H^r(\Omega)} =\left(\sum_{j=1}^\infty \lambda_j^r (v,\phi_j)^2\right)^{1/2}.
$$
Note that $\|v\|_{\dot H^r(\Omega)}=(\cL^r v, v)^{1/2}=\|\cL^{r/2}v\|$. It is shown, see, for instance,  \cite[Lemma 3.1] {thomee1997}, 
that for $r$ a nonnegative integer, $\dot H^r(\Omega)$ consists of all functions $v$ in $ H^r(\Omega)$ which satisfy the boundary conditions $\cL^j v=0$ on $\partial \Omega$ for $j<r/2$, 
%and that the norms $\|\cdot\|_{\dot H^r(\Omega)}$ and  $\|\cdot\|_{H^r(\Omega)}$ are  equivalent in $\dot H^r(\Omega)$.
 and that the norm $\|\cdot\|_{\dot H^r(\Omega)}$ is equivalent to the usual norm in $H^r(\Omega)$.
For $r>0$ we also define  $\dot H^{-r}(\Omega)$ to be the dual space of $\dot H^r(\Omega)$.
% with respect to the inner product in $L^2(\Omega)$. 
 Since $\dot H^{1}(\Omega)$ and $H^1_0(\Omega)$ coincide, so does $\dot H^{-1}(\Omega)$ and $H^{-1}(\Omega)$, the dual space of $H^1_0(\Omega)$.
% defined by
%$$
%\dot H^{-r}(\Omega)=\left\{F\in H^{-1}(\Omega):\; \sum_{j=1}^\infty \lambda_j^r |\langle F,\phi_j\rangle|%^2<\infty\right\},
%$$
%and equipped with the norm
%$$
%\|F\|_{\dot H^{-r}(\Omega)}=\left(\sum_{j=1}^\infty \lambda_j^r |\langle F,\phi_j\rangle|^2\right)^{1/2}.
Note  that $\{\dot H^r(\Omega)\}$, $r\geq -1$,  form a Hilbert scale of interpolation spaces. 
%Hence, to link these dotted spaces to standard functional spaces,   
Thus, we denote  by $\|\cdot\|_{H^r_0(\Omega)}$ the norm on the interpolation scale between
$H^1_0(\Omega)$ and $L^2(\Omega)$ when $r\in[0,1]$ and  the norm on the interpolation scale between
and $L^2(\Omega)$ and $H^{-1}(\Omega)$ when $r\in[-1,0]$. 
Then, the norms $\dot H^{r}(\Omega)$ and $\dot H^{r}_0(\Omega)$  are equivalent for any $r\in[-1,1]$ by interpolation.

%--------------------------------------------------------
%\subsection{Regularity and technical results}
Regularity properties of the solution $u$ of the time-fractional problem \eqref{a} play a key role in the error analysis  of the finite element method,
particularly, since $u$  has singularity near $t=0$, even for smooth given data. 
From \cite{SY-2011} and \cite{Mclean2010}, we recall the regularity results for the problem \eqref{a} in terms of the initial data $u_0$  
for the homogeneous problem $(f=0)$. In particular,  for $t>0$,
%assuming zero source term $f$: for  $\ell =0,1$
%Over the convex domain $\Omega$, by combining the results of Theorems 4.1, 4.2   and 5.6 in \cite{Mclean2010}, for  $0\le r,\,\mu \le 2$, we obtain
\begin{equation}\label{eq:reg-a}
\|\Ba u\| + \|\cL u\|\le  Ct^{-\alpha}\|u_0\|,
%\|\Ba u\|_{L^2(\Omega)} + \|\cL u\|_{L^2(\Omega)}\le  Ct^{-\alpha}\|u_0\|_{L^2(\Omega)},
\end{equation}
and for $r\geq 0$, 
\begin{equation}\label{eq:reg}
t^\ell\|u^{(\ell)}(t)\|_{\dot H^{r+\mu}(\Omega)} \le  Ct^{-\alpha\mu/2}\|u_0\|_{\dot H^r(\Omega)}, \quad \ell=0,1,
\end{equation}
where  $0\leq \mu \leq 2$ when $\ell=0$ and $-2\leq \mu \leq 2$ when $\ell=1$.

Next, we recall some  properties of the fractional  operators  $\I^{\alpha}$ and $\Ba$ that will be used in the subsequent sections. For piecewise time continuous functions $\varphi:[0,T] \to X,$ where $X$ is a Hilbert space with inner product $(\cdot,\cdot)_X$ and norm $|\cdot|_X$, 
%$\varphi:[0,T] \to H_0^1(\Omega),$ we have
it is well-known that
$\int_0^T (\I^\alpha\varphi,\varphi)_X\,dt\geq 0$. Furthermore, by \cite[Lemma A.1]{McLean2012}, it  follows that  for $\varphi\in W^{1,1}(0,T;X)$,  $\int_0^T (\RL\varphi,\varphi)_X\,dt \geq 0$.

Using the result in \cite[Lemma 3.1 (iii)]{MustaphaSchoetzau2014} and the inequality $\cos(\alpha \pi/2)\geq 1-\alpha$, we obtain the following continuity property of $\I^{\alpha}$: for suitable functions $\varphi$ and $\psi$,
\begin{equation}\label{eq:p-1}
\int_0^t (\I^\alpha\varphi,\psi)_X\,ds\leq \epsilon \int_0^t (\I^\alpha\varphi,\varphi)_X\,ds +
\frac{1}{4\epsilon(1-\alpha)}\int_0^t (\I^\alpha\psi,\psi)_X\,ds,\quad \mbox{for } \epsilon>0.
\end{equation}
In our analysis, we shall also make use of the following inequality which holds by combining Lemmas 2.1 and 2.2 in 
\cite{LMM-2016}: if $\varphi(0)=0$, then
\begin{equation}\label{eq:p-2}
|\varphi(t)|_X^2\leq \frac{t^\alpha}{\alpha^2}\int_0^t (\I^{1-\alpha}\varphi',\varphi')_X\,ds,\quad \mbox{for } t>0.
\end{equation}
Finally, we recall the following identity which follows from the generalized Leibniz formula: 
%for fractional derivatives, we state and show some identities
%for our subsequent use. % These identities will be used in our error analysis.
\begin{equation}\label{Leibniz-1}
\RL(t\varphi)=t\RL\varphi +\alpha\I^{1-\alpha}\varphi.
\end{equation}
%Note that since $\Ba \varphi= \RL(\varphi-\varphi(0))$
Since $\Ba \varphi(t) = \RL(\varphi(t)-\varphi(0))$, we see that
\begin{equation}\label{Leibniz-2}
\Ba(t\varphi)=t\Ba \varphi +\alpha\I^{1-\alpha}\varphi+t\omega_{1-\alpha}(t) \varphi(0).
\end{equation}

For the rest of the paper, $C$ is a generic constant that may depend on $\alpha$ and $T$, but is independent of the spatial mesh size element $h$.

%%%%%%%%%%%%%%%%%%%%%%%%%%%%%%%%%%%%%%%%%%%%%%%%%%%%%%%%%%%%%%%%%%%%%%%%%%%%%%%%%%%%%%%
%%%%%%%%%%%%%%%%%%%%%%%%%%%%%%%%%%%%%%%%%%%%%%%%%%%%%%%%%%%%%%%%%%%%%%%%%%%%%%%%%%%%%%%
%%%%%%%%%%%%%%%%%%%%%%%%%%%%%%%%%%%%%%%%%%%%%%%%%%%%%%%%%%%%%%%%%%%%%%%%%%%%%%%%%%%%%%%

\section{General error estimates}\label{sec:AB}
%\se

%We now introduce the solution operator $T$ of the corresponding elliptic problem
%$$
%\cL  u=f\; \mbox{ in } \Omega,\qquad u=0 \mbox{ on } \; \partial \Omega,
%$$
%as $w=Tf$. Then, $T:L^2(\Omega)\to H^2(\Omega)\cap H_0^1(\Omega)$ is compact, selfadjoint and positive. In term of $T$,

Given the elliptic problem
$$
\cL  u=f\; \mbox{ in } \Omega,\qquad u=0 \mbox{ on } \; \partial \Omega,
$$
with $f\in L^2(\Omega)$, we now define the solution operator  $T:L^2(\Omega)\to H_0^1(\Omega)$ by 
$$a(Tf,v)=(f,v)\quad \forall v\in H_0^1(\Omega),$$ where $a(u,v)=(A\nabla u,\nabla v)+(cu,v)$. Note that
$T:L^2(\Omega)\to L^2(\Omega)$ is compact, selfadjoint and positive definite.
 In term of $T$, we may write the initial-boundary value problem \eqref{a} as
\begin{equation}\label{pr-1}
T\Ba u +u = Tf,\quad t>0,\quad u(0)=u_0.
\end{equation}
For the purpose of approximating the solution of this problem,  let  $V_h\subset L^2(\Omega)$ be a family of finite dimensional spaces  that depends on $h$, $0<h<1$. We assume that we are given a corresponding family 
of linear operators $T_h:L^2(\Omega)\to V_h$ which approximate $T$. Then consider the semidiscrete problem: find $u_h(t)\in V_h$ for $t\geq 0$ such that
 \begin{equation}\label{pr-2}
T_h\Ba u_h +u_h = T_hf,\quad t>0 ,\quad  u_h(0)=u_{0h}\in V_h,
\end{equation} 
where $u_{0h}$ is some approximation to  $u_0$. We shall make the assumptions that
 $T_h$ is selfadjoint,  positive semidefinite on $L^2(\Omega)$ and  positive definite on $V_h$.
%(i) $T_h$ is selfadjoint,  positive semidefinite on $L^2(\Omega)$ and  positive definite on $V_h$.
% Since $T_h^{-1}$ exists on  $V_h$,  \eqref{pr-2} may  be solved uniquely for $t\geq 0$.  
Let $e(t)=u_h(t)-u(t)$ denote the error at time $t$. Then, 
by subtracting \eqref{pr-2} from  \eqref{pr-1}, we find that  $e$ satisfies 
\begin{eqnarray}\label{IVP-nn}
 T_h\Ba e(t)+e(t)= (T_h-T)(f-\Ba u)(t),\quad t>0.
\end{eqnarray}
With $\rho(t) = (T_h-T)(f-\Ba u)(t)$, we, thus, obtain
\begin{equation} \label{eq:T_h}
T_h\Ba e(t) +e(t) = \rho(t),\quad  t>0,
\end{equation}
 with an initial data $e(0)\in L^2(\Omega)$. 
%Recall that $T_h$ is  compact, selfadjoint and  positive semidefinite on $L^2(\Omega)$. 
Then, by using the positive square root of $T_h$, we deduce from the positivity property of $\I^\alpha$ that for $t>0$, 
\begin{equation}\label{positivity-1}
\int_0^t (T_h\I^\alpha\varphi,\varphi)\,ds\geq 0.
\end{equation}
and, similarly, 
\begin{equation}\label{positivity-2}
\int_0^t (T_h\,\RL\varphi,\varphi)\,ds \geq 0.
\end{equation}
%We point out that, as $T_h$ is a finite-rank  operator  and so it is a compact operator. 
%Note that that,  $T_h$ is a compact operator since it has a  finite-dimensional range. 

Our task now is to derive estimates for $e$ in terms of $\rho$.  We begin by proving the following result.
%We shall present the problem in a general setting so that it can be adopted to different situations. 

%\subsection{Main estimates}
%Then, the following result  holds
%Then, the following result  holds, whose proof  can be found in \cite{MT2010, MustaphaMcLean2011}.
%
%
\begin{lemma}\label{lem:AB-1} Let  $e\in C([0,T],L^2(\Omega))$ such that $T_h\Ba e(t) +e(t) = \rho(t)$ for $t>0$. Then 
\begin{equation}\label{e-1}
\|e(t)\|\leq \|e(0)\|+4\left(\|\rho(0)\|+\int_0^t\|\rho_t(s)\|\,ds\right).
\end{equation}
In addition, if $T_he(0)=0$, then
\begin{equation}\label{e-2}
\beta \int_0^t\|T_h\Ba e(s)\|^2\,ds + (1-\beta)\int_0^t\|e(s)\|^2\,ds\leq \int_0^t\|\rho(s)\|^2\,ds, \quad \beta= 0,1.
\end{equation}
\end{lemma}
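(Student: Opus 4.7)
The plan is to test the identity \eqref{eq:T_h} with well-chosen functions and invoke the positivity properties of the fractional operators together with the positive semi-definiteness of $T_h$ (which allows the square root $T_h^{1/2}$ to be commuted past the time-fractional operators).

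For the pointwise estimate \eqref{e-1}, I would take the $L^2(\Omega)$ inner product of \eqref{eq:T_h} with $e_t$. Writing $\Ba e = \I^{1-\alpha}e_t$, the product $(T_h\I^{1-\alpha}e_t,e_t)$ equals $(\I^{1-\alpha}T_h^{1/2}e_t,T_h^{1/2}e_t)$ by the self-adjointness of $T_h$, whose time integral is nonnegative by the positivity of $\I^{1-\alpha}$. Since $(e,e_t)=\tfrac12\tfrac{d}{dt}\|e\|^2$, integrating over $[0,t]$ yields
\[
\tfrac{1}{2}\|e(t)\|^2 \leq \tfrac{1}{2}\|e(0)\|^2 + \int_0^t(\rho, e_s)\,ds.
\]
Integration by parts in time rewrites the right-hand side as $(\rho(t),e(t)) - (\rho(0),e(0)) - \int_0^t(\rho_s, e)\,ds$. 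Introducing $M(t):=\sup_{0\le s\le t}\|e(s)\|$, Cauchy--Schwarz together with the crude bound $\|\rho(s)\|\leq \|\rho(0)\| + \int_0^s\|\rho_\tau\|\,d\tau$ produces the quadratic inequality $M(t)^2 - 2AM(t) \leq \|e(0)\|^2 + 2\|\rho(0)\|\,\|e(0)\|$ with $A = \|\rho(0)\| + 2\int_0^t\|\rho_s\|\,ds$. Completing the square and using $\sqrt{a^2+b^2}\leq a+b$ delivers \eqref{e-1} with the stated factor of $4$.

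For \eqref{e-2}, I would instead square the identity \eqref{eq:T_h} pointwise in $L^2(\Omega)$ and integrate, obtaining
\[
\int_0^t\|\rho\|^2\,ds = \int_0^t\|T_h\Ba e\|^2\,ds + \int_0^t\|e\|^2\,ds + 2\int_0^t(T_h\Ba e, e)\,ds.
\]
Once the cross term is shown to be nonnegative, this rearranges to $\int_0^t\|T_h\Ba e\|^2\,ds + \int_0^t\|e\|^2\,ds \leq \int_0^t\|\rho\|^2\,ds$, and both the $\beta=0$ and $\beta=1$ cases follow by dropping one of the two nonnegative terms on the left. To prove the cross-term nonnegativity under $T_h e(0)=0$, I would set $w:=e-e(0)$ so that $w(0)=0$ and $\Ba e = \RL w$, and split
\[
\int_0^t(T_h\Ba e, e)\,ds = \int_0^t(T_h\RL w, w)\,ds + \int_0^t(T_h\RL w, e(0))\,ds.
\]
The first term is $\geq 0$ by \eqref{positivity-2}, and the second vanishes since $(T_h\RL w,e(0))=(\RL w, T_h e(0))=0$ by self-adjointness of $T_h$ and the hypothesis.

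The main obstacle I foresee is the bookkeeping in \eqref{e-1}: the supremum $M(t)$ must be introduced at the right moment and the quadratic absorption carried out carefully to produce the clean constant $4$. In contrast, \eqref{e-2} reduces at once to the nonnegativity of the cross term, which is immediate from \eqref{positivity-2} once the decomposition $e = w + e(0)$ is used to isolate the effect of the hypothesis $T_h e(0)=0$.
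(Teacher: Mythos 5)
Your argument is correct. For \eqref{e-1} you follow essentially the paper's route: test \eqref{eq:T_h} with $e_t$, discard $\int_0^t(T_h\I^{1-\alpha}e_t,e_t)\,ds\ge 0$ via the square root of $T_h$ and \eqref{positivity-1}, integrate by parts in time and absorb with the supremum $M(t)$; the paper simply bounds every term by $\sup_{s\le t}\|e(s)\|$ times a bracket and divides, whereas you run a completing-the-square argument on $M(t)^2-2AM(t)\le\|e(0)\|^2+2\|\rho(0)\|\,\|e(0)\|$ — both yield the constant $4$ (yours in fact gives $\|e(0)\|+3\|\rho(0)\|+4\int_0^t\|\rho_t\|\,ds$), so the difference is only bookkeeping. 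For \eqref{e-2} your route is genuinely different and slightly stronger: the paper proves the $\beta=0$ and $\beta=1$ cases separately, testing \eqref{eq:T_h} once with $e$ and once with $T_h\Ba e$ and using Cauchy--Schwarz, while you square the identity to get $\int_0^t\|T_h\Ba e\|^2\,ds+\int_0^t\|e\|^2\,ds+2\int_0^t(T_h\Ba e,e)\,ds=\int_0^t\|\rho\|^2\,ds$, so that a single nonnegativity of the cross term delivers the sum of both terms at once (and hence also the convex combinations appearing later in \eqref{e-2b}). Your justification of that cross term — writing $w=e-e(0)$, so $\Ba e=\RL w$, invoking \eqref{positivity-2} for $\int_0^t(T_h\RL w,w)\,ds$ and killing $\int_0^t(\RL w,T_he(0))\,ds$ by self-adjointness and $T_he(0)=0$ — is a clean, explicit version of a step the paper only asserts (``$\int_0^t(T_h\Ba e,e)\,ds\ge 0$ since $T_he(0)=0$''), and is a welcome addition.
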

\begin{proof} Form the $L^2(\Omega)$-inner product between \eqref{eq:T_h} and $e_t$ to find that
$$
(T_h \I^{1-\alpha} e_t,e_t) +\frac{1}{2}\frac{d}{dt}\|e\|^2 = (\rho, e_t)= \frac{d}{dt}(\rho, e)-(\rho_t, e).
$$
Integrate with respect to time and  observe that $\int_0^t(T_h \I^{1-\alpha} e_t(s),e_t(s))\,ds\geq  0$ by \eqref{positivity-1}. Then, it follows that
\begin{eqnarray*}
\|e(t)\|^2 & \leq & \|e(0)\|^2+2\left(\|\rho(t)\|\|e(t)\|+\|\rho(0)\|\|e(0)\|+\int_0^t\|\rho_t\|\|e\|\,ds\right)\\
& \leq & \sup_{s\leq t}\|e(s)\|\left(\|e(0)\|+2\|\rho(t)\|+2\|\rho(0)\|+2\int_0^t\|\rho_t\|\,ds\right)\\
& \leq & \sup_{s\leq t}\|e(s)\|\left(\|e(0)\|+4\|\rho(0)\|+4\int_0^t\|\rho_t\|\,ds\right).
\end{eqnarray*}
Here, we have used $\rho(t)=\rho(0)+\int_0^t\rho_t(s)\,ds$.
Now, \eqref{e-1} follows by replacing   $\|e(t)\|^2$ with  $\|e(t)\|\sup_{s\leq t}\|e(s)\|$ on the left-hand side. For the second estimate \eqref{e-2}, we form the $L^2(\Omega)$-inner product between \eqref{eq:T_h} and $e$ and obtain
%and note that $T_h \Ba e= T_h \bar\partial_t^{\alpha}e$  
$$
(T_h \Ba e,e) +\|e\|^2 = (\rho, e).
$$
Then, we integrate with respect to time and 
note that  $\int_0^t(T_h \Ba e(s),e(s))\,ds\geq  0$ since $T_he(0)=0$ to derive \eqref{e-2} for $\beta=0$. The estimate with $\beta=1$ follows analogously after taking  the $L^2(\Omega)$-inner product between \eqref{eq:T_h} and $T_h\Ba e$ and proceeding similarly. This completes the rest of the proof.
\end{proof}

\begin{remark}\label{rem:AB-1-d}
%We now estimate $T_h\I^{1-\alpha}e$. For this purpose 
We integrate \eqref{eq:T_h} over $(0,t)$, keeping in mind that 
$T_he(0)=0$ and noting that $T_h\I^{1-\alpha}e = T_h\Ba \tilde e$ to find that 
\begin{equation}\label{ee-1}
T_h\Ba \tilde e +\tilde e=\tilde \rho,\qquad \tilde e(t)=\int_0^te(s)\,ds.
\end{equation}
An application of Lemma \ref{lem:AB-1}  yields 
$
\|\tilde e\|\leq 4 \int_0^t\| \rho(s)\|\,ds,
$
and hence,
$$
\|T_h\I^{1-\alpha}e\|\leq \|\tilde e\|+  \|\tilde\rho\|\leq C \int_0^t\| \rho(s)\|\,ds.
$$
This implies  $\|T_h\I^{1-\alpha}e\|^2\leq C t  \int_0^t\| \rho(s)\|^2\,ds$. Again, using Lemma Lemma \ref{lem:AB-1}, we deduce
\begin{equation}\label{e-2b}
\beta \int_0^t\|T_h\Ba \tilde e(s)\|^2\,ds + (1-\beta)\int_0^t\|\tilde e(s)\|^2\,ds\leq \int_0^t\|\tilde\rho(s)\|^2\,ds, \quad \beta\in [0,1].
\end{equation}
\end{remark}

%\newpage % 2

\begin{lemma}\label{lem:AB-2}
Let $e\in C([0,T],L^2(\Omega))$ such that 
\begin{equation} \label{IVP}
T_h\Ba e(t) +e(t) = \rho(t), \quad t>0, \quad T_he(0)=0.
\end{equation}
Then
\[
\int_0^ts^2\|e(s)\|^2\,ds
\le 2\int_0^t \left( s^2\|\rho\|^2+ 4 \|\tilde \rho\|^2\right)\,ds.
\]
\end{lemma}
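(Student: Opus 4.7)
The plan is to multiply the governing equation $T_h\Ba e + e = \rho$ by $t$, apply the generalized Leibniz formula \eqref{Leibniz-2} to turn $tT_h\Ba e$ into an operator applied to $w := te$, and thereby reduce the desired weighted estimate to a direct application of the estimates already proved in Lemma \ref{lem:AB-1} and Remark \ref{rem:AB-1-d}.

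First I would write, using \eqref{Leibniz-2} with $\varphi$ replaced by $e$,
\[
\Ba(te)=t\Ba e+\alpha\I^{1-\alpha}e+t\omega_{1-\alpha}(t)e(0).
\]
Applying $T_h$ on both sides and using the crucial hypothesis $T_he(0)=0$ kills the boundary term and gives
\[
tT_h\Ba e = T_h\Ba(te)-\alpha T_h\I^{1-\alpha}e.
\]
Since $\tilde e(0)=0$, one has $T_h\I^{1-\alpha}e = T_h\Ba\tilde e$, and then from \eqref{ee-1} this equals $\tilde\rho-\tilde e$. Multiplying the original equation by $t$ and substituting therefore yields
\[
T_h\Ba w + w = t\rho + \alpha(\tilde\rho-\tilde e), \qquad w:=te,
\]
with $w(0)=0$, hence $T_hw(0)=0$.

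The estimate \eqref{e-2} of Lemma \ref{lem:AB-1} applied to $w$ with $\beta=0$ then gives
\[
\int_0^t\|w(s)\|^2\,ds \le \int_0^t\|s\rho+\alpha(\tilde\rho-\tilde e)\|^2\,ds.
\]
Using $(a+b)^2\le 2a^2+2b^2$ together with $\tilde\rho-\tilde e=T_h\Ba\tilde e$ and $\alpha\in(0,1)$ we bound the right-hand side by
\[
2\int_0^t s^2\|\rho\|^2\,ds+2\alpha^2\int_0^t\|T_h\Ba\tilde e\|^2\,ds.
\]
Finally, \eqref{e-2b} with $\beta=1$ shows $\int_0^t\|T_h\Ba\tilde e\|^2\,ds\le \int_0^t\|\tilde\rho\|^2\,ds$, which combined with $2\alpha^2<8$ yields the claimed inequality.

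The main obstacle I anticipate is the correct bookkeeping of the Leibniz formula: one must notice that the initial-value term $t\omega_{1-\alpha}(t)e(0)$ appearing in \eqref{Leibniz-2} (which is singular at $t=0$ and would be problematic in an energy argument) is annihilated precisely by the operator $T_h$ under the hypothesis $T_he(0)=0$. Without this cancellation, $w=te$ would not satisfy a clean equation of the same form as \eqref{eq:T_h}, and the reduction to Lemma \ref{lem:AB-1} would fail. The remaining work is merely to express the inhomogeneous term $\alpha T_h\I^{1-\alpha}e$ in a way that is controlled by the already-established $\tilde\rho$-bound from Remark \ref{rem:AB-1-d}.
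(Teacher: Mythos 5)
Your proposal is correct and follows essentially the paper's own route: multiplying \eqref{eq:T_h} by $t$, using \eqref{Leibniz-2} (with the $e(0)$ term annihilated by $T_h$), performing the energy/positivity estimate on the equation for $te$ (which you delegate to \eqref{e-2} with $\beta=0$ instead of redoing the test with $te$), and controlling $\alpha T_h\I^{1-\alpha}e=T_h\Ba\tilde e=\tilde\rho-\tilde e$ via \eqref{e-2b} with $\beta=1$. The only difference is cosmetic bookkeeping of constants (you in fact get $2\alpha^2$ in place of $8$, which is sharper than the stated bound), so the argument stands as is.
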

\begin{proof} 
Multiply \eqref{eq:T_h} by $t$ and use the identity \eqref{Leibniz-2} so that
\begin{equation} \label{eq:T_h-t}
T_h\Ba (te) +te = t\rho+\alpha T_h\I^{1-\alpha}e.
\end{equation}
Form the $L^2(\Omega)$-inner product between \eqref{eq:T_h-t} and $te$, and integrate over $(0,t)$. Then, a use of
the positivity property \eqref{positivity-2} shows
\[
\int_0^ts^2\|e(s)\|^2\,ds
\le 2\int_0^t \left( s^2\|\rho(s)\|^2+ \|T_h\I^{1-\alpha}e(s)\|^2\right)\,ds.
\]
Now, the result follows by using \eqref{e-2b} with $\beta=1$. This completes the rest of the proof.
\end{proof}

%\newpage % 3

\begin{lemma}\label{lem:AB-3}
Under the assumption of Lemma $\ref{lem:AB-2}$, there holds for $t>0$,  
\[
\|e(t)\|^2 \leq C \left( \|\rho(t)\|^2 + \frac{1}{t}\int_0^t (\|\rho\|^2+ \|s\rho_t\|^2)\,ds\right).
\]
\end{lemma}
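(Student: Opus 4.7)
The plan is to reduce to the first estimate of Lemma~\ref{lem:AB-1} applied to the rescaled function $w(t):=te(t)$, for which the energy argument becomes pointwise because $w(0)=0$. Multiplying the equation $T_h\Ba e + e = \rho$ by $t$ and invoking the generalized Leibniz identity \eqref{Leibniz-2} (with $\varphi = e$), I would obtain
\[
t T_h \Ba e = T_h \Ba (te) - \alpha T_h\I^{1-\alpha} e - t\omega_{1-\alpha}(t)\,T_h e(0).
\]
The hypothesis $T_h e(0)=0$ is crucial at this step, since it kills the singular boundary term $t\omega_{1-\alpha}(t)T_h e(0)$. What remains is
\[
T_h\Ba w + w = F, \quad F(t):= t\rho(t) + \alpha T_h\I^{1-\alpha}e(t), \quad w(0)=0.
\]

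Since $w(0)=0$ and $F(0)=0$, applying estimate \eqref{e-1} of Lemma~\ref{lem:AB-1} (with $e$ replaced by $w$ and $\rho$ by $F$) gives
\[
t\,\|e(t)\| = \|w(t)\| \le 4 \int_0^t \|F_s(s)\|\,ds.
\]
Next I would compute $F_s$: differentiation yields $F_s = \rho + s\rho_s + \alpha T_h(\I^{1-\alpha}e)_s$, and since $(\I^{1-\alpha}e)_s = \RL e = \Ba e + \omega_{1-\alpha}(s)e(0)$, the hypothesis $T_h e(0)=0$ again removes the singular piece, leaving $T_h(\I^{1-\alpha}e)_s = T_h\Ba e$. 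Using the original equation $T_h\Ba e = \rho - e$ then produces the clean expression
\[
F_s = (1+\alpha)\rho + s\rho_s - \alpha e.
\]

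With this in hand, the remaining work is routine: triangle inequality followed by Cauchy--Schwarz gives
\[
t^2\|e(t)\|^2 \le C t \int_0^t \bigl(\|\rho\|^2 + \|s\rho_s\|^2 + \|e\|^2\bigr)\,ds,
\]
and invoking the $\beta=0$ case of \eqref{e-2} in Lemma~\ref{lem:AB-1} to absorb $\int_0^t\|e\|^2\,ds$ into $\int_0^t\|\rho\|^2\,ds$, then dividing by $t^2$, yields
\[
\|e(t)\|^2 \le \frac{C}{t}\int_0^t\bigl(\|\rho\|^2 + \|s\rho_s\|^2\bigr)\,ds,
\]
which is at least as strong as the stated inequality (the $\|\rho(t)\|^2$ summand can then be added trivially). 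The only delicate step is the careful bookkeeping with the Leibniz identity so that both extra boundary-type terms generated by applying $\frac{d}{dt}$ to $\I^{1-\alpha}$ vanish; once the hypothesis $T_h e(0)=0$ has been used to dispatch both of them, the problem collapses onto Lemma~\ref{lem:AB-1} and no genuinely new energy argument is needed.
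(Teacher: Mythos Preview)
Your proof is correct and follows the same overall strategy as the paper---multiply the equation by $t$, use the Leibniz identity \eqref{Leibniz-2} to obtain an equation for $te$, and then close with the estimates of Lemma~\ref{lem:AB-1}---but your execution is more streamlined. The paper repeats the energy argument from scratch: it takes the inner product of \eqref{eq:T_h-t} with $(te)_t$, integrates by parts in time, and is left with a pointwise boundary term $\|T_h\I^{1-\alpha}e(t)\|$ and an integral of $\|T_h\Ba e\|$; these are then controlled via the auxiliary bound $\|T_h\I^{1-\alpha}e\|^2\leq Ct\int_0^t\|\rho\|^2\,ds$ from Remark~\ref{rem:AB-1-d} together with both cases of \eqref{e-2}. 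You instead apply \eqref{e-1} as a black box to the rescaled equation and immediately substitute $T_h\Ba e=\rho-e$ when computing $F_s$; this sidesteps Remark~\ref{rem:AB-1-d} entirely and needs only the $\beta=0$ case of \eqref{e-2}. As a consequence your final bound is slightly sharper than the one stated, since the pointwise term $\|\rho(t)\|^2$ never appears.
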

\begin{proof} 
Take the $L^2(\Omega)$-inner product between \eqref{eq:T_h-t} and $(te)_t$ to find that
\begin{equation*}
\begin{split}
(T_h\I^{1-\alpha}& (te)_t,(te)_t) +\frac{1}{2}\frac{d}{dt}\|te(t)\|^2 
= (t\rho,(te)_t)+\alpha (T_h\I^{1-\alpha}e,(te)_t)\\
=& \frac{d}{dt}(t\rho,te)-((t\rho)_t,te)
+\alpha \frac{d}{dt}(T_h\I^{1-\alpha}e,te)
-\alpha (T_h\Ba e,te).
\end{split}
\end{equation*}
Integrate over $(0,t)$ and use the positivity property \eqref{positivity-1} to obtain
$$
\frac{1}{2} t^2\|e(t)\|^2\leq \|te\|\left(
\|t\rho\|+ \alpha \| T_h\I^{1-\alpha}e\|\right) +
\int_0^ts( \|\rho+s\rho_t\|+\alpha  \| T_h\Ba e\|)\|e\|ds.
$$
Hence, we derive
$$
t^2\|e(t)\|^2\leq C\left( t^2\|\rho\|^2+ \| T_h\I^{1-\alpha}e\|^2 +
t\int_0^t(   \|\rho\|^2+\|s\rho_t\|^2 + \| T_h\Ba e\|^2+\|e\|^2)ds\right).
$$
Note that,  by Lemma \ref{lem:AB-1}, we arrive at $\int_0^t(\|T_h\Ba e\|^2+\| e\|^2)\,ds\leq 2\int_0^t\|\rho\|^2\,ds$ and also  $\|T_h\I^{1-\alpha}e\|^2\leq C t  \int_0^t\| \rho(s)\|^2\,ds$. This  completes  the proof.
\end{proof}

We shall now prove the main result of this section. 
\begin{lemma}\label{lem:AB-4}
Under the assumption of Lemma \ref{lem:AB-2}, there holds for $t>0$,  
\begin{equation}\label{est}
t^2\|e(t)\|^2 \leq C \left( t^2\|\rho(t)\|^2 + \|\tilde \rho\|^2+
\frac{1}{t}\int_0^t (\|s^2\rho_t\|^2+\|s\rho\|^2+ \|\tilde \rho\|^2)\,ds\right).
\end{equation}
\end{lemma}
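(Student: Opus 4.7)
The plan is to bootstrap Lemma \ref{lem:AB-3} by applying it to the scaled function $v := te$. Since $v(0)=0$, we have $T_h v(0) = 0$. Multiplying $T_h\Ba e + e = \rho$ by $t$ and using the Leibniz identity \eqref{Leibniz-2} (exactly as in the derivation of \eqref{eq:T_h-t}), one obtains
\[
T_h\Ba v + v = R, \qquad R := t\rho + \alpha\, T_h \I^{1-\alpha} e,
\]
which is of the same form as \eqref{eq:T_h}. Lemma \ref{lem:AB-3} applied to $v$ with $R$ in place of $\rho$ then gives
\[
t^2\|e(t)\|^2 \le C\Bigl(\|R(t)\|^2 + \tfrac{1}{t}\int_0^t(\|R\|^2 + \|s R_s\|^2)\,ds\Bigr).
\]

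Next I would expand $R$ and $R_s$. Using $T_h\Ba e = \rho - e$, one sees $R_s = (1+\alpha)\rho + s\rho_s - \alpha e$, so
\[
\|R\|^2 \le C(\|t\rho\|^2 + \|T_h\I^{1-\alpha}e\|^2),\qquad \|sR_s\|^2 \le C(\|s\rho\|^2 + \|s^2\rho_s\|^2 + \|se\|^2),
\]
and the $\|se\|^2$ integral is controlled directly by Lemma \ref{lem:AB-2}. The decisive step is bounding $T_h\I^{1-\alpha}e$, which is where the $\|\tilde\rho\|^2$ terms in the target inequality must arise.

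I would handle this by a second bootstrap: since $T_h\I^{1-\alpha}e = T_h\Ba\tilde e$ and $\tilde e$ satisfies $T_h\Ba\tilde e + \tilde e = \tilde\rho$ with $\tilde e(0)=0$ (Remark \ref{rem:AB-1-d}), we have the identity $T_h\I^{1-\alpha}e = \tilde\rho - \tilde e$. Applying Lemma \ref{lem:AB-3} to $\tilde e$ (whose forcing satisfies $(\tilde\rho)_s = \rho$) then gives the pointwise bound
\[
\|T_h\I^{1-\alpha}e(t)\|^2 \le C\Bigl(\|\tilde\rho(t)\|^2 + \tfrac{1}{t}\int_0^t(\|\tilde\rho\|^2 + \|s\rho\|^2)\,ds\Bigr),
\]
while Lemma \ref{lem:AB-1} with $\beta=0$, applied to $\tilde e$, produces the integrated estimate $\int_0^t \|T_h\I^{1-\alpha}e\|^2\,ds \le C\int_0^t\|\tilde\rho\|^2\,ds$.

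Substituting these bounds together with the Lemma \ref{lem:AB-2} estimate for $\int_0^t\|se\|^2\,ds$ back into the inequality for $v$, and collecting like terms, produces exactly the claimed estimate. The main obstacle is this treatment of $T_h\I^{1-\alpha}e$: the straightforward bound $\|T_h\I^{1-\alpha}e\|^2 \le Ct\int_0^t\|\rho\|^2\,ds$ already recorded in Remark \ref{rem:AB-1-d} is insufficient, since it would generate $\|s\rho\|$-type contributions instead of the sharper $\|\tilde\rho\|$-type terms demanded by \eqref{est}; only the secondary bootstrap through $\tilde e$ using Lemma \ref{lem:AB-3} produces them.
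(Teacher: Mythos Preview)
Your proposal is correct and follows essentially the same route as the paper's own proof: the paper writes $\eta=t\rho+\alpha T_h\Ba\tilde e$ (your $R$, since $T_h\I^{1-\alpha}e=T_h\Ba\tilde e$), applies Lemma~\ref{lem:AB-3} to $te$, then bounds $\|\eta(t)\|^2$ via the identity $T_h\Ba\tilde e=\tilde\rho-\tilde e$ together with Lemma~\ref{lem:AB-3} for $\tilde e$, and handles $\int_0^t\|s\eta_t\|^2\,ds$ using $T_h\Ba e=\rho-e$ and Lemma~\ref{lem:AB-2}, exactly as you do. The only cosmetic difference is that the paper does not separately spell out the $\int_0^t\|\eta\|^2\,ds$ bound, but your use of \eqref{e-2b} (which is what your ``Lemma~\ref{lem:AB-1} with $\beta=0$ applied to $\tilde e$'' amounts to, via $T_h\I^{1-\alpha}e=\tilde\rho-\tilde e$) is the natural way to fill that in.
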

\begin{proof}
Note that from \eqref{eq:T_h-t},  
$$
T_h\Ba (te) + te = \eta,
$$
where $\eta=t\rho+\alpha T_h\Ba \tilde e$. 
Then, by the estimate in Lemma \ref{lem:AB-3},
\begin{equation}\label{m-15-n}
\|te(t)\|^2 \leq C \left( \|\eta(t)\|^2 + \frac{1}{t}\int_0^t (\|\eta\|^2+ \|s\eta_t\|^2)\,ds\right).
\end{equation}
Since $T_h\Ba \tilde e=\tilde\rho-\tilde e$, it follows that
\begin{eqnarray}\label{m-16-n}
 \|\eta(t)\|^2  & \leq & C \left(t^2 \|\rho(t)\|^2  +\|\tilde \rho(t)\|^2+ \|\tilde e(t)\|^2\right)\nonumber\\
& \leq & C \left(t^2 \|\rho(t)\|^2 + \|\tilde \rho(t)\|^2
+\frac{1}{t}\int_0^t (\|s\rho\|^2+\|\tilde\rho\|^2)\,ds\right),
\end{eqnarray}
where the last term is obtained by applying Lemma \ref{lem:AB-3} to \eqref{ee-1}. For the time derivative in the integral  on the right-hand side of \eqref{m-15-n}, we note using Lemma \ref{lem:AB-2} that
\begin{eqnarray}\label{m-17-n}
 \int_0^t\|s\eta_t(s)\|^2\,ds  & \leq & C \left( \int_0^t(\|s^2\rho_t(s)\|^2+\|s\rho(s)\|^2 + 
 s^2 \| T_h\Ba e\|^2)\,ds \right)\nonumber\\
& \leq &  C \left( \int_0^t(\|s^2\rho_t(s)\|^2+\|s\rho(s)\|^2 + 
 s^2 \|e(s)\|^2)\,ds \right)\nonumber\\
 & \leq &  C \left( \int_0^t(\|s^2\rho_t(s)\|^2+\|s\rho(s)\|^2 + 
  \|\tilde\rho(s)\|^2)\,ds \right).
\end{eqnarray}
On substitution of \eqref{m-16-n} and \eqref{m-17-n} in \eqref{m-15-n}, we arrive at \eqref{est} and this completes the  lemma.
\end{proof}

As an immediate consequence, we obtain the following lemma.

\begin{lemma}\label{lem:AB-5}
Under the assumption of Lemma $\ref{lem:AB-2}$, there holds for $t>0$,
\begin{equation}\label{sup}
\|e(t)\|\leq C t^{-1}\sup_{s\leq t} ( \|\tilde\rho(s)\|+s\|\rho(s)\|+ s^2\|\rho_t(s)\|).
\end{equation}
\end{lemma}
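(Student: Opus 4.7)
The plan is to derive Lemma \ref{lem:AB-5} as a direct corollary of Lemma \ref{lem:AB-4} by bounding each of the five terms on the right‑hand side of \eqref{est} by a constant multiple of
$$
M(t)^2 := \Bigl(\sup_{s\le t}\bigl(\|\tilde\rho(s)\|+s\|\rho(s)\|+s^2\|\rho_t(s)\|\bigr)\Bigr)^2.
$$
Once this is done, dividing by $t^2$ and taking square roots yields \eqref{sup}.

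First I would simply note the two pointwise bounds: $t^2\|\rho(t)\|^2 = (t\|\rho(t)\|)^2 \le M(t)^2$ and $\|\tilde\rho(t)\|^2 \le M(t)^2$, directly from the definition of $M(t)$. Next, for each of the three integral terms in \eqref{est}, I would use that the integrands $\|s^2\rho_t(s)\|^2$, $\|s\rho(s)\|^2$, and $\|\tilde\rho(s)\|^2$ are each bounded by $M(s)^2 \le M(t)^2$ for $s\le t$, since $M$ is a nondecreasing function of $t$. Integrating over $(0,t)$ gives a factor of $t$, which cancels exactly against the prefactor $\tfrac{1}{t}$, so each integral contributes at most $C M(t)^2$.

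Collecting these five estimates in \eqref{est} yields $t^2\|e(t)\|^2 \le C M(t)^2$, from which \eqref{sup} follows immediately. There is no real obstacle here: Lemma \ref{lem:AB-4} has already absorbed all the analytic difficulty (the positivity properties of $\I^\alpha$, the Leibniz formula \eqref{Leibniz-2}, and the iterated application of Lemma \ref{lem:AB-3} to both \eqref{eq:T_h} and \eqref{ee-1}), and what remains is only the bookkeeping observation that the right‑hand side of \eqref{est} is tailored so that each term is controlled by the supremum appearing in \eqref{sup}.
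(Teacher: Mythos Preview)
Your proposal is correct and matches the paper's own treatment: the paper presents Lemma~\ref{lem:AB-5} as ``an immediate consequence'' of Lemma~\ref{lem:AB-4} without further detail, and your argument spells out precisely the trivial bookkeeping (bounding each of the five terms on the right-hand side of \eqref{est} by $M(t)^2$) that justifies this claim.
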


\begin{remark}\label{rem:1}
Note that the estimate \eqref{sup} is still valid in the limiting case $\alpha=1$, i.e.,
for the  parabolic problem. This estimate is established in \cite[Formula (3.16)]{thomee1997}. 
%Also, we observe that in the above analysis, it is possible to replace the $L^2(\Omega)$-inner product $(\cdot , \cdot)$ 
%by any inner or semi-inner product  $\langle \cdot , \cdot \rangle$ for which $\langle T_h w, w \rangle$ is nonegative.
\end{remark}

\begin{remark}\label{rem:1-n} 
In the above analysis, it is possible to replace the $L^2(\Omega)$-inner product $(\cdot , \cdot)$ 
by any inner or semi-inner product  $\langle \cdot , \cdot \rangle$ for which $\langle T_h w, w \rangle$ is nonegative. %To give an example, 
As an example,  note that since $T_h$ is selfadjoint and positive semidefinite on $L^2(\Omega)$, the followings 
$$
(v,w)_{-r,h}=(T_h^rv,w),\qquad \|v\|_{-r,h}=(T_h^rv,v)^{1/2}
$$
define a semi-inner product and a semi-norm. Applying these, for instance, in the proof of Lemma \ref{lem:AB-1}, 
the estimate \eqref{e-1}  becomes  
\begin{equation}
\|e(t)\|_{-r,h}\leq \|e(0)\|_{-r,h}+4\left(\|\rho(0)\|_{-r,h}+\int_0^t\|\rho_t(s)\|_{-r,h}\,ds\right).
\end{equation}
%Following \cite{thomee-1980},  this basic estimate can be used to prove certain superconvergence results.
%Such an estimate can be ued to demonstrate superconvergence results, see  for details.
%This basic estimate has been used in \cite{thomee-1980} to prove certain superconvergence results.
%This basic estimate can be used  to prove certain superconvergence results, see \cite{thomee-1980}.
This basic estimate has been used in \cite{thomee-1980}  to prove certain superconvergence results.
\end{remark}

%%%%%%%%%%%%%%%%%%%%%%%%%%%%%%%%%%%%%%%%%%%%%%%%%%%%%%%%%%%%%%%%%%%%%%%%%%%%%
%%%%%%%%%%%%%%%%%%%%%%%%%%%%%%%%%%%%%%%%%%%%%%%%%%%%%%%%%%%%%%%%%%%%%%%%%%%%%
%%%%%%%%%%%%%%%%%%%%%%%%%%%%%%%%%%%%%%%%%%%%%%%%%%%%%%%%%%%%%%%%%%%%%%%%%%%%%

\section{Applications: Galerkin FE methods}\label{sec:Appli}
%\section{Applications: primal FE formulation}\label{sec:Appli} 
%\se

In this part,  we present some applications of our analysis to approximate the solution of \eqref{a} by Galerkin FE methods, and derive optimal $L^2(\Omega)$-error estimates  for problems with smooth and nonsmooth initial data. The Galerkin  methods include the standard $C^0$-conforming FE method on both  convex and nonconvex domains, and some nonconforming methods. Other Galerkin approximation methods, such as  Galerkin spectral methods, are in many ways similar to Galerkin FE methods, as the main difference is in the choice of the finite-dimensional approximating spaces.
%In the following, we denote by $\|\cdot\|$ the standard norm in $L^2(\Omega)$.

%For convenience, and without lost of generality, we assume that $A=-\Delta$.

%To define the scheme, let  $\mathcal{T}_h$ be a family of regular triangulations (made of simplexes $K$) of
%the  domain $\overline{\Omega}$ and let $h=\max_{K\in \mathcal{T}_h}(\mbox{diam}K),$ where $h_{K}$ denotes
%the diameter  of the element  $K.$  

%--------------------------------------------------------------------------

\subsection{$C^0$-conforming FE method}\label{sec:GFEM}
%\subsection{Standard Galerkin FEM}\label{sec:GFEM}

The weak formulation for problem  \eqref{a} is to seek $u:( 0,T]\to H^1_0(\Omega)$ such that
\begin{equation} \label{weak}
(\Ba u,v )+ a(u,v )=  (f,v )\quad
\forall v\in H^1_0(\Omega), \quad t>0, \quad u(0)=u_0,
\end{equation}
%where  $a(u,v)=(\nabla u, \nabla v)$ for all $u,v\in H^1_0(\Omega)$. 
where $a(\cdot,\cdot)$ is already defined.
The  approximate solution $u_h$ will be sought in the finite element space 
$$V_h=\{v_h\in C^0(\overline {\Omega})\;:\;v_h|_{K}\;\mbox{is linear for all}~
K\in \mathcal{T}_h\; \mbox{and} \; v_h|_{\partial \Omega}=0\},$$
where  $\mathcal{T}_h$ is a family of shape-regular partitions of
the  domain $\overline{\Omega}$ into triangles $K$, with $h=\max_{K\in \mathcal{T}_h}h_K,$ where $h_{K}$ denotes the diameter  of the element  $K.$  
The semidiscrete Galerkin FEM for problem   for \eqref{a} is then defined as: find  $u_h:(0,T]\to V_h$ such that
\begin{equation} \label{semi}
(\Ba u_h,v_h)+ a(u_h,v_h)=  (f,v_h)\quad
\forall v_h\in V_h, \quad t>0, \quad u_h(0)=u_{0h},
\end{equation}
where $u_{0h}\in V_h$ is a suitable approximation of $u_0$. 
%Upon introducing the discrete operator $\cL_h:V_h\to V_h$ defined by
%$$
%(\cL_h u,v)= a( u, v)\quad \forall u,v\in V_h,
%$$
%the semidiscrete scheme \eqref{semi} is rewritten as
%\begin{equation} \label{semi-11}
%\Ba u_h+\cL_h u_h= P_h f,\quad t>0, \quad u_h(0)=u_{0h},
%\end{equation}

To derive error estimates, we  introduce some more notation. Let  problem $T:L^2(\Omega)\rightarrow 
H^2(\Omega)\cap H_0^1(\Omega)$ 
be the solution operator of the  elliptic problem corresponding to \eqref{a}, i.e., for $f\in L^2(\Omega)$, we define 
$Tf$ by 
\begin{equation}\label{T}
a(Tf,v)=(f,v)\quad \forall v \in H_0^1(\Omega).
\end{equation}
Then,  $T$ is a bounded, selfadjoint and positive definite operator on $L^2(\Omega)$.
%, see \cite{thomee1997,BSTW-1977}. 
%By applying $T$ to \eqref{a} we obtain the equivalent problem 
Note that from \eqref{weak}, 
$$ 
 a(u,v )=  (f-\Ba u,v )\quad\forall v\in H^1_0(\Omega),
$$
and hence, we have an equivalent formulation as
\begin{equation}\label{pb-1}
T\Ba u+u=Tf, \quad t>0, \quad  u(0)=u_0.
\end{equation}
Similarly, we let  $T_h:L^2(\Omega)\rightarrow V_h$ be the solution operator of the corresponding discrete elliptic problem:
\begin{equation}\label{T_h}
a(T_hf, \chi)=(f,\chi)\quad \forall \chi \in V_h.
\end{equation}
Then, \eqref{semi} is equivalently rewritten as
\begin{equation}\label{pb-2}
T_h\Ba u_h+u_h=T_hf, \quad t>0, \quad  u_h(0)=u_{0h}.
\end{equation}
The operator $T_h$ is selfadjoint, positive semidefinite  on $L^2(\Omega)$ 
and positive definite  on $V_h$, see \cite{BSTW-1977}, and satisfies the following property:
\begin{equation}\label{T-T}
\|\nabla^\ell(T_h-T)f\|\leq Ch^{2-\ell}\|f\|\quad \forall f\in L^2(\Omega), \quad \ell=0,1.
\end{equation}
Furthermore, it is easily verified that
$$T_h=T_hP_h \qquad \mbox{and} \qquad  T_h=R_hT,$$
where  $P_h$ is the orthogonal projection of $L^2(\Omega)$ onto $V_h$  defined by $(P_h v-v, \chi)= 0$ $\forall \chi\in V_h,$ and $R_h$ is the  Ritz projection $R_h : H_0^1(\Omega) \to V_h $ defined by the following relation:
%\begin{equation}\label{bar Ritz}
$a(R_h v-v, \chi)= 0$ $\forall \chi\in V_h.$
%\end{equation}
For $t\in (0,T]$, we define the projection error $\rho(t)=R_h u(t)-u(t)$. Then, $\rho$ satisfies the following estimates
\cite{Ciarlet-2002}: for $\ell=0,1,$
\begin{equation}\label{rho-1}
\|\rho^{(\ell)}(t)\|_{\dot H^j(\Omega)}\leq C h^{m-j} \|u^{(\ell)}(t)\|_{\dot H^m(\Omega)},\qquad~~j=0,1,~~m=1,2.
\end{equation}

Now we prove the following theorem. 
%With no loss of generality, we choose  $u_{0h}=P_hu_0$.
%
%
\begin{theorem} \label{thm:L2}
 Let $u$ and $u_h$  be the solutions of $(\ref{a})$ and $(\ref{semi})$, respectively,  with $f=0$ and
 $u_h(0)=P_h u_0$.  Then, for $u_0 \in \dot H^\delta(\Omega)$,  $0\le \delta \le 2$,
$$
 \|(u_h-u)(t)\| \leq
 C h^2 t^{-\alpha(2-\delta)/2}\|u_0\|_{\dot H^\delta(\Omega)}, \quad t >0.
%Ch^2 t^{-\alpha\mu/2}~~{\rm \,.
$$
\end{theorem}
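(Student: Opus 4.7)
The plan is to realize the error $e = u_h - u$ within the abstract framework of Section \ref{sec:AB} and then invoke Lemma \ref{lem:AB-5}. Subtracting \eqref{pb-1} from \eqref{pb-2} with $f = 0$ gives $T_h\Ba e + e = \rho$ with $\rho(t) = (T - T_h)\Ba u(t)$. Because $u$ solves $\Ba u = -\cL u$, we rewrite $T\Ba u = -u$ and $T_h\Ba u = -T_h\cL u = -R_h u$, so $\rho(t) = R_h u(t) - u(t)$ is simply the Ritz projection error. The initial compatibility condition $T_h e(0) = 0$ required by Lemma \ref{lem:AB-5} follows from the choice $u_h(0) = P_h u_0$ together with the identity $T_h = T_h P_h$, since $T_h(P_h u_0 - u_0) = T_h u_0 - T_h u_0 = 0$.

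Next I would quantify the three quantities appearing in the supremum in \eqref{sup}, namely $\|\tilde\rho(s)\|$, $s\|\rho(s)\|$, and $s^2\|\rho_t(s)\|$. Combining the projection estimate \eqref{rho-1} with the regularity estimate \eqref{eq:reg} (taking $r = \delta$ and $\mu = 2 - \delta$) yields
\begin{equation*}
\|\rho(s)\| \le C h^2 \|u(s)\|_{\dot H^2(\Omega)} \le C h^2 s^{-\alpha(2-\delta)/2}\|u_0\|_{\dot H^\delta(\Omega)},
\end{equation*}
and analogously for $\rho_t$ with the $\ell = 1$ case of \eqref{eq:reg}, giving
\begin{equation*}
\|\rho_t(s)\| \le C h^2 s^{-1-\alpha(2-\delta)/2}\|u_0\|_{\dot H^\delta(\Omega)}.
\end{equation*}
For $\tilde\rho(s) = \int_0^s \rho(\tau)\,d\tau$, the integrability at the origin is guaranteed by the strict inequality $\alpha(2-\delta)/2 < 1$ (since $\alpha < 1$), and an elementary time integration gives $\|\tilde\rho(s)\| \le C h^2 s^{1-\alpha(2-\delta)/2}\|u_0\|_{\dot H^\delta(\Omega)}$.

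The three terms $\|\tilde\rho(s)\|$, $s\|\rho(s)\|$, $s^2\|\rho_t(s)\|$ all share the same scaling $C h^2 s^{1-\alpha(2-\delta)/2}\|u_0\|_{\dot H^\delta(\Omega)}$. Since the exponent $1 - \alpha(2-\delta)/2$ is positive, the supremum over $s \le t$ is attained at $s = t$, and plugging into \eqref{sup} produces
\begin{equation*}
\|e(t)\| \le C t^{-1}\cdot h^2 t^{1-\alpha(2-\delta)/2}\|u_0\|_{\dot H^\delta(\Omega)} = C h^2 t^{-\alpha(2-\delta)/2}\|u_0\|_{\dot H^\delta(\Omega)},
\end{equation*}
which is the desired bound. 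The conceptually delicate point is less computational than organisational: matching the exact weights $\|\tilde\rho\|$, $s\|\rho\|$, $s^2\|\rho_t\|$ supplied by Lemma \ref{lem:AB-5} against the singularity profiles in \eqref{eq:reg}, so that the weighted exponent is uniformly positive and the resulting power of $t$ is exactly $1$ less than the claimed singular behaviour, producing the unified estimate across all $\delta \in [0,2]$ without needing separate arguments for the smooth and nonsmooth regimes.
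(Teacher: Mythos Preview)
Your proof is correct and follows essentially the same route as the paper: set up the error equation $T_h\Ba e + e = \rho$, verify $T_h e(0)=0$ from the choice $u_{0h}=P_hu_0$, invoke Lemma~\ref{lem:AB-5}, and bound the three weighted quantities using the regularity estimate~\eqref{eq:reg}. The only cosmetic difference is that you write $\rho = R_h u - u$ and invoke the Ritz projection bound~\eqref{rho-1}, whereas the paper keeps the equivalent form $\rho = (T_h - T)\cL u$ and applies~\eqref{T-T}; since $T_h = R_h T$ and $\|\cL u\| = \|u\|_{\dot H^2(\Omega)}$, these are the same estimate.
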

\begin{proof} 
Let $e(t)=u_h(t)-u(t)$ be the error of the FE approximation at time $t$. Then, from \eqref{IVP} and \eqref{a}, the error   $e$ satisfies 
\begin{equation}\label{IVP-2}
T_h\Ba e+e=\rho, \quad \rho=(T_h-T)\cL u. %=(R_h-I)u.
\end{equation}
%Since $T_he(0)=0$, Lemma \ref{lem:AB-5} applies directly to \eqref{IVP-2}.
Note that, with $u_{0h}=P_hu_0$,  
$T_he(0)=0$ since $(e(0),\chi)= 0 \,\forall \chi \in V_h$. Hence, we are now in position to apply Lemma \ref{lem:AB-5}. By \eqref{T-T}, we deduce  
\begin{equation}\label{sup-2}
\|e(t)\| \leq C h^2 t^{-1}\sup_{s\leq t} ( \|\cL \tilde u(s)\| +s\|\cL u(s)\| + s^2\|\cL u_t(s)\|).
\end{equation}
Using the regularity property in \eqref{eq:reg}, we obtain for $u_0 \in \dot H^{\delta}(\Omega)$ with $0\leq \delta\leq 2$,
$$
\|u(s)\|_{\dot H^{2}(\Omega)}\leq C s^{-\alpha(2-\delta)/2}\|u_0\|_{\dot H^{\delta}(\Omega)}.
$$
Hence, since $2-\delta<2$, we find
$$
\|\tilde u(s)\|_{\dot H^{2}(\Omega)}\leq \int_0^t\|u(\xi)\|_{\dot H^{2}(\Omega)}\,d\xi\leq  C  s^{-\alpha(2-\delta)/2+1}\|u_0\|_{\dot H^{\delta}(\Omega)},
$$
and, similarly,
$$
s\|u_t(s)\|_{\dot H^{2}(\Omega)}\leq C s^{-\alpha(2-\delta)/2}\|u_0\|_{\dot H^{\delta}(\Omega)}.
$$
Combining these estimates with \eqref{sup-2} completes the proof.
\end{proof}

\begin{remark}\label{rem:0} By splitting the error 
$$u_h-u=(u_h-R_h u)+(R_h u-u)=:\theta+\rho,$$
 noting that $\|\theta(t)\| \leq \|(u_h-u)(t)\|+\|\rho(t)\|$, and using the Ritz projection bound in \eqref{rho-1} (with $j=1$ and $m=2$), we conclude that the estimate in Theorem \ref{thm:L2} is valid for $\theta$. 
Under the quasi-uniformity condition on $V_h$,  the inverse inequality $\|\nabla\theta(t)\|\leq Ch^{-1}\|\theta(t)\|$, and the estimate $\|\rho(t)\|_{H^1(\Omega)} \le  Ch\|u(t)\|_{H^2(\Omega)}\le 
Ct^{-\alpha(2-\delta)/2}\|u_0\|_{\dot H^\delta(\Omega)}$, which follows from  \eqref{rho-1} (with $j=1$ and $m=2$)  and the regularity property \eqref{eq:reg}, we obtain the following optimal error estimate in the $H^1(\Omega)$-norm:
\begin{equation}\label{eq:0}
 \|\nabla(u_h-u)(t)\| \leq
 C h t^{-\alpha(2-\delta)/2} \|u_0\|_{\dot H^\delta(\Omega)}\quad{\rm for}~~t \in (0,T]~~{\rm with}~~0\le \delta \le 2\,.
\end{equation}
%By removing the quasi-uniformity mesh assumption, this error bound remains valid for $0\le \delta\le 1,$  see Theorem %\ref{thm: H1 bound}.
\end{remark}

%Now we give an error estimate in the gradient norm.
%
%
%\begin{theorem} \label{thm:H1}
% Let $u$ and $u_h$  be the solutions of $(\ref{a})$ and $(\ref{semi})$ with $f=0$,
%and let $u_h(0)=P_h u_0$. Then, for $u_0 \in \dot H^\delta(\Omega)$, $1\le \delta \le 2$,
%$$
% \|\nabla(u-u_h)(t)\| \leq
% C h t^{-\alpha(2-\delta)/2}\|u_0\|_\delta\quad{\rm for}~~t \in (0,T].
%%Ch^2 t^{-\alpha\mu/2}~~{\rm \,.
%$$
%\end{theorem}
%\begin{proof}
%To prove the estimate, we consider the equation \eqref{IVP-2} in  $X=H_0^1(\Omega)$ with  
%$u_0\in X$ and $u_{0h}=P_hu_0\in X$.  
%The space $X$ is now equipped with inner product $\langle \varphi,\chi\rangle=(\nabla \varphi, \nabla \chi)$ and induced %norm $\|\cdot\|_1$. We note that
%$T_h$ is selfadjoint and positive definite on $X$: for all $\varphi,\chi\in X$,
%$$
%\langle T_h\varphi,\chi\rangle = (\nabla T_h\varphi, \nabla \chi)= (\nabla T_h\varphi, \nabla R_h\chi)
%= (\varphi, R_h\chi)=  (\nabla\varphi, \nabla TR_h\chi)= \langle \varphi,T_h\chi\rangle.
%$$
%In particular,
%$$
%\langle T_h\varphi,\varphi\rangle = \|\varphi\|^2.
%$$
%By  Lemma \ref{lem:AB-5} we have
%\begin{equation*}\label{sup-3}
%\|e(t)\|_1\leq C  t^{-1}\sup_{s\leq t} ( \|\tilde \rho(s)\|_1+s\|\rho(s)\|_1+ s^2\|\rho_t(s)\|_1).
%\end{equation*}
%The desired estimate follows then by \eqref{rho-1} (with $j=1$, $m=2$) and \eqref{eq:reg}.
%\end{proof}

%
\begin{remark}\label{rem:1-nn} For smooth initial data $u_0\in {\dot H}^2(\Omega)$, the estimate in Theorem \ref{thm:L2} remains valid when $u_h(0)=R_hu_0$. Indeed, let $\bar u_h$ denote the solution of \eqref{semi} with $\bar u_h(0)=R_hu_0$. Then, 
$\xi:=u_h-\bar u_h$ satisfies
$$
T_h\Ba \xi+\xi= 0, \quad t>0,\quad \xi(0)= P_hu_0-R_hu_0.
$$
Since $\xi\in V_h$, it follows that $\Ba \xi+\cL_h \xi= 0$, 
where $\cL_h$ is the discrete operator $\cL_h:V_h\to V_h$ defined by
$$
(\cL_h u,v)= a(u, v)\quad \forall u,v\in V_h.
$$
Then, a regularity result similar to \eqref{eq:reg}, yields
$$
\|\xi(t)\|\leq \|(P_h-R_h)u_0\|\leq Ch^2\|u_0\|_{\dot H^2(\Omega)}.
$$
The  $L^2(\Omega)$-estimate follows then by the triangle inequality. 
%The estimate in the gradient norm follows by using the inverse inequality $\|\nabla \xi\|\leq Ch^{-1}\|\xi\|$.
\end{remark}

\begin{remark}\label{rem:1-b}
Instead of imposing  Dirichlet boundary conditions  in \eqref{a} we could
have considered, for instance, homogeneous Neumann type boundary conditions.
Assuming in such a case that $c(x)\geq c_0>0$ a.e. in $ \Omega$, the operator $\cL$ is again positive definite,
so the spaces $\dot H^r(\Omega)$ may be defined in an analogous way.
%the eigenvalues of the corresponding elliptic problem are again positive so that the spaces $\dot H^r(\Omega)$ may be
%defined in the analogous way. 
According to \cite{Mclean2010}, the smoothing property \eqref{eq:reg} still holds and we may
again introduce $T$ and $T_h$ and then consider both problems \eqref{pb-1} and \eqref{pb-2}. 
The analysis covers this case of boundary conditions.
\end{remark}

%--------------------------------------------------------------------------
%--------------------------------------------------------------------------

\subsection{FE method on nonconvex domain}\label{sec:GFEM-b}

Our next target is to study the FE approximation in the the case when the domain $\Omega$ is a nonconvex polygonal domain in $\mathbb{R}^2$, with (for simplicity) exactly one reentrant angle $\omega\in(\pi,2\pi)$, and set $\beta = \pi/\omega \in(\frac{1}{2},1)$. 
For the special case of an L-shaped domain, $\omega=3\pi/2$ and $\beta =2/3$. 
It is well-know that for such a domain, the regularity of the solution of the elliptic problem
$\cL  u=f$ in $\Omega$, $u=0$ on $\partial \Omega$ 
in limited as a result of the singularity near the reentrant corner. 
%A regularity shift-theorem for this problem, known in kellogg [--], may be expressed as 
%$$
%\|u\|_{\dot H^{1+s}}\leq C\|f\|_{\dot H^{-1+s}}=C\|\Delta\|_{\dot H^{-1+s}}, \quad \mbox{ for }\; 0\leq s< \beta.
%$$
Furthermore, the optimal  FE error in $L^2(\Omega)$-norm for this problem is reduced from $O(h^2)$ to $O(h^{2\beta})$.  
Indeed, we have the following error estimate:
%\begin{equation}\label{NC-1-b}
%\|R_hu-u\|+h^\beta \|\nabla(R_hu-u)\|\leq C_s h^{2\beta} \|u\|_{\dot{H}^{1+s}(\Omega)}, \quad (\beta <s \leq 1),
%\end{equation}
\begin{equation}\label{NC-1}
\|T_hf-f\|+h^\beta \|\nabla(T_hf-f)\|\leq C_s h^{2\beta} \|f\|_{\dot{H}^{-1+s}(\Omega)}, \quad (\beta <s \leq 1),
\end{equation}
where $C_s$ depends on $s$, see \cite{CLTW-2006}. We shall now demonstrate  that, for the homogeneous problem, an optimal $O(h^{2\beta})$   error estimate 
holds for the semidiscrete approximation for smooth and nonsmooth initial data.

\begin{theorem} \label{thm:NC} 
 Let $u$ and $u_h$  be the solutions of $(\ref{a})$ and $(\ref{semi})$, respectively, with $f=0$ and  $u_h(0)=P_h u_0$.  Assume that $\Omega$ is nonconvex  and has exactly one reentrant angle. Then, we have, for $\beta <s \leq 1$, with $C=C_s$,
$$
 \|(u_h-u)(t)\| \leq
 C h^{2\beta} t^{-\alpha(1+s-\delta)/2}\|u_0\|_{\dot H^\delta(\Omega)},\quad 0\leq \delta \leq 1+s.
$$
%and if the mesh is quasi-uniform, then
%$$
% \|(u-u_h)(t)\| \leq
% C h^{\beta} t^{-\alpha(1+s-\delta)/2}\|u_0\|_\delta\quad 1\leq \delta \leq 1+s.
%$$
\end{theorem}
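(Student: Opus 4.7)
The plan is to follow the template of Theorem \ref{thm:L2}, with the sharper nonconvex elliptic error estimate \eqref{NC-1} playing the role of \eqref{T-T}. Setting $e = u_h - u$, one again has $T_h\Ba e + e = \rho$ with $\rho = (T_h - T)\cL u$, and since $u_h(0) = P_h u_0$ the orthogonality $(e(0),\chi) = 0$ for all $\chi \in V_h$ forces $T_h e(0) = 0$. Lemma \ref{lem:AB-5} then gives
\begin{equation*}
\|e(\tau)\| \leq C\tau^{-1}\sup_{\sigma \leq \tau}\bigl(\|\tilde\rho(\sigma)\| + \sigma\|\rho(\sigma)\| + \sigma^2\|\rho_t(\sigma)\|\bigr),
\end{equation*}
where I write $\tau$ and $\sigma$ for the time variables to avoid a clash with the regularity index $s$.

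Next I would apply \eqref{NC-1} pointwise in time to the right-hand side $f = \cL u(\tau)$, using the spectral definition of the fractional scale to rewrite $\|\cL v\|_{\dot H^{-1+s}}$ as $\|v\|_{\dot H^{1+s}}$; this yields $\|\rho(\tau)\| \leq Ch^{2\beta}\|u(\tau)\|_{\dot H^{1+s}(\Omega)}$ and likewise $\|\rho_t(\tau)\| \leq Ch^{2\beta}\|u_t(\tau)\|_{\dot H^{1+s}(\Omega)}$. The smoothing estimate \eqref{eq:reg} with $r = \delta$ and $\mu = 1+s-\delta$ then provides
\begin{equation*}
\|u(\tau)\|_{\dot H^{1+s}(\Omega)} + \tau\|u_t(\tau)\|_{\dot H^{1+s}(\Omega)} \leq C\tau^{-\alpha(1+s-\delta)/2}\|u_0\|_{\dot H^\delta(\Omega)},
\end{equation*}
and a single time integration gives $\|\tilde\rho(\tau)\| \leq Ch^{2\beta}\tau^{1-\alpha(1+s-\delta)/2}\|u_0\|_{\dot H^\delta(\Omega)}$. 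Each of the three terms inside the supremum is thus bounded by $Ch^{2\beta}\tau^{1-\alpha(1+s-\delta)/2}\|u_0\|_{\dot H^\delta(\Omega)}$, and the $\tau^{-1}$ prefactor in Lemma \ref{lem:AB-5} absorbs one power of $\tau$ to produce the claimed estimate.

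The main thing that needs care is the chain of admissibility conditions: that $\mu = 1+s-\delta \in [0,2]$ so \eqref{eq:reg} applies for both $\ell = 0, 1$ (which holds because $0 \leq \delta \leq 1+s$ and $s \leq 1$); that $\cL u(\tau)$ actually lies in $\dot H^{-1+s}(\Omega)$ with the expected norm identification coming from the eigenfunction expansion of $\cL$; and that the integral defining $\tilde\rho(\tau)$ is convergent at the origin, which amounts to $\alpha(1+s-\delta)/2 < 1$ and follows from $\alpha < 1$. Beyond these verifications the argument is structurally identical to Theorem \ref{thm:L2}: no new analysis of the fractional time operator is required, and the reduction of the convergence rate from $h^2$ to $h^{2\beta}$ and the corresponding strengthening of the singularity in $t$ both come entirely from the nonconvex elliptic estimate \eqref{NC-1}.
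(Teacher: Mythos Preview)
Your proposal is correct and follows essentially the same route as the paper: apply Lemma~\ref{lem:AB-5} (whose proof does not use convexity), then use \eqref{NC-1} together with the identification $\|\cL u\|_{\dot H^{-1+s}(\Omega)} = \|u\|_{\dot H^{1+s}(\Omega)}$ and the smoothing estimate \eqref{eq:reg} with $\mu = 1+s-\delta$, exactly as in Theorem~\ref{thm:L2}. Your explicit check of the admissibility conditions on $\mu$ and of the integrability of $\tau^{-\alpha(1+s-\delta)/2}$ near the origin is a nice addition.
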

\begin{proof}
It is worth noticing that, while proving the estimate in Lemma \ref{lem:AB-5}, the convexity of $\Omega$
%smoothness of $\partial \Omega$ 
was not actually required. Using Lemma \ref{lem:AB-5} and   \eqref{NC-1}, we find that for $\beta <s \leq 1$
\begin{equation}\label{sup-2-nn}
\|e(t)\|\leq C_s h^{2\beta} t^{-1}\sup_{s\leq t} \left( \| \tilde u(s)\|_{\dot{H}^{1+s}(\Omega)}+s\| u(s)\|_{\dot{H}^{1+s}(\Omega)}+ s^2\|u_t(s)\|_{\dot{H}^{1+s}(\Omega)}\right).
\end{equation}
Recalling that 
$$\|u(t)\|_{\dot{H}^{s_1}(\Omega)}\leq C t^{-\alpha(s_1-s_2)/2}\|u(t)\|_{\dot{H}^{s_2}(\Omega)}, \quad 0\leq s_2\leq s_1,$$
%and using \eqref{NC-1}, we find 
%$$
%\|\rho(t)\|\leq  C h^{2\beta} \|u(t)\|_{\dot{H}^{1+s}}\leq C h^{2\beta}  t^{-\alpha(1+s-\delta)/2}\|u_0\|_\delta, 
%\quad 0\leq \delta \leq 1+s.
%$$
we then complete  the proof by following the arguments in the proofs of Theorems \ref{thm:L2}.
% and \ref{thm:H1}.
\end{proof}

We note that when $u_0$ is sufficiently regular, $u_0\in \dot H^{1+s}(\Omega)$, the error bounds are uniform in time, whereas,  for the nonsmooth data, $u_0\in L^2(\Omega)$, the  $L^2(\Omega)$-norm of the error is bounded as 
$ \|(u_h-u)(t)\| \leq C h^{2\beta} t^{-\alpha(1+s)/2}\|u_0\|_{L^2(\Omega)}$.

\subsection{Nonconforming FE methods}\label{sec:NC}
Now, we come to the error analysis of nonconforming FE methods for problem \eqref{a}. 
As an example, we consider the method by Crouzeix and Raviart \cite{CR-1973}, based on the nonconforming FE space
%As an example, we set $\cL=-\Delta$, and consider the method by Crouzeix and Raviart \cite{CR-1973}, based on the %nonconforming FE space
\begin{equation*}
\begin{split}
\widetilde V_h=\{ & v\in L^2(\Omega): u|_K \mbox{ is linear for all } K\in \mathcal{T}_h,\, v  \mbox{ is }\\
& \mbox{ continuous at the midpoints of the interior edges, }\\
& \mbox{ and } v=0 \mbox{ at the midpoints of edges on } \partial\Omega \}.
\end{split}
\end{equation*}
The discrete problem becomes: find $u_h(t)\in \widetilde V_h$ such that
\begin{equation}\label{pr-9}
(\Ba u_h,v)+a_h(u_h,v)=(f,v)\quad \forall v\in \widetilde V_h,
\end{equation}
where the bilinear form $a_h:\widetilde V_h\times \widetilde V_h \to\mathbb{R}$ is defined by 
\begin{equation}\label{a-CR}
a_h(u,v)=\sum_{K\in \mathcal{T}_h}\int_K(A\nabla u\cdot \nabla v+cuv)\,dx,
\end{equation}
%\begin{equation}\label{a-CR}
%a_h(u,v)=\sum_{K\in \mathcal{T}_h}\int_K\nabla u\cdot \nabla v\,dx =
%\sum_{K\in \mathcal{T}_h}(\nabla u,\nabla v)_K,\quad u,v\in \widetilde V_h,
%\end{equation}
with  associated broken norm 
%$$\|v\|_h=\sqrt{\sum_{K\in \mathcal{T}_h}(A\nabla u\cdot \nabla v)_K}.$$ Note
$$\|v\|_h=\left(\sum_{K\in \mathcal{T}_h}\int_K(A\nabla v\cdot \nabla v)\,dx\right)^{1/2}.$$
Note that $\|\cdot\|_h$ is indeed a norm on $\widetilde V_h$. Let  $T_h:L^2(\Omega)\rightarrow \widetilde V_h$ be the solution operator of the corresponding discrete elliptic problem:
\begin{equation*}\label{T_h-CR}
a_h(T_hf,\chi)=(f,\chi)\quad \forall \chi \in \widetilde V_h.
\end{equation*}
%so that \eqref{semi} is equivalent to
%\begin{equation}\label{pb-2}
%T_h\Ba u_h+u_h=T_hf, \quad t>0, \quad  u_h(0)=u_{0h}.
%\end{equation}
Then, since $a_h(\cdot,\cdot)$ is symmetric, the operator $T_h$ is selfadjoint and  positive semidefinite  on $L^2(\Omega)$: for all $f,g \in L^2(\Omega)$
$$(f,T_hg)=a_h(T_hf,T_hg)=(T_hf,g) \quad \mbox{ and }\quad (f,T_hf)=\|T_hf\|_h^2\geq 0,$$
and clearly, $T_h$ is  positive definite  on $\widetilde V_h$. 
 Furthermore, the following well-known estimate holds:
 % \cite{CR-1973}:
\begin{equation}\label{CR-2}
\|T_hf-Tf\|+h\|T_hf-Tf\|_h\leq Ch^2\|f\|.
\end{equation}
%Since, the $L^2$-projection on $\widetilde V_h$ satisfies (ii), we conclude that the three conditions (i)-(iii) are
With $u_{0h}$  being the the $L^2$-projection of $u_0$ on $\widetilde V_h$ so that $T_h(u_{0h}-u_0)=0$, we deduce that the error estimate in Theorem \ref{thm:L2}  holds true for the  Crouzeix-Raviart nonconforming FE solution $u_h$. 
%{\bf need Theorem}

%Next, we choose $u_0\in H^1(\Omega)$, $u_{0h}=P_hu_0\in V_h$ and  consider the equation \eqref{IVP-2} in  the space %$X=V_h+H_0^1(\Omega)$ equipped with the inner product   $(\cdot,\cdot)_h= a_h(\cdot,\cdot)$ for all $u,\,v\in X$.\\
%{\bf problems:} $(u,T_hv)_h\neq (T_hu,v)_h$ for $u,v \in V_h+H_0^1(\Omega)$.
%$$(u,T_hv)_h= a_h(u,T_hv)=a_h(\tilde  R_h u,T_hv)=(\tilde R_h u, v) =(\tilde R_h u,P_h v)=a_h(T_h\tilde R_h u,P_h v)
%$$
%$$(T_hu,v)_h= a_h(T_hu,v)%=(\tilde R_h u,P_h v)=a_h(T_h\tilde R_h u,P_h v)
%$$
Our analysis can also be applied  to  other nonconforming methods, including  Nitsche's method \cite{Nitsche1971} and the Lagrange multiplier method of Babuska \cite{Babuska1973}. In Nitsche's method,  the bilinear form   
$a_h(\cdot,\cdot)$ in \eqref{a-CR}, with $\cL=-\Delta$,  is given by
$$
a_h(u,\chi)=a(u,\chi)-\left\langle \frac{\partial u}{\partial n} ,\chi\right\rangle-
\left\langle u, \frac{\partial \chi}{\partial n}\right\rangle+\beta h^{-1}(u,\chi),
$$
where $\langle\cdot,\cdot\rangle$ denotes the inner product in $L^2(\partial \Omega)$, ${\partial u}/{\partial n}$ the conormal derivative on $\partial\Omega$ and $\beta$ a positive constant. Here, the finite element  space is defined as
$$
 \widetilde V_h =\{\chi\in C^0(\overline\Omega): \chi|_K\in P_1(K)\},
$$
without any boundary conditions imposed on $\partial\Omega$. 
%
%\begin{remark} In the standard Laplace transform technique, one has to write \eqref{pr-9} in operator form, which is not %an easy task.
%\end{remark}

%We note that our method can cover
%Note that our analysis can also be applied to the Lagrange multiplier method of Babuska, see [-].

%\begin{remark} 	Upon minor modifications, our analysis extends  to problems  with more general  elliptic second-order %differential operator 
%$$A u =-\nabla\cdot(\kappa(x) \nabla u) + c(x) u,$$
%where the tensor $\kappa(x):{\mathbb R}^2\rightarrow {\mathbb R}^{2\times2}$ is smooth and has a positive minimum %eigenvalue 
%$\lambda_1(\kappa(x))\geq \kappa_0$ for some $\kappa_0>0$  and  
%$c(x)\in L^\infty(\Omega)$ with $c(x)\geq c_0 >0$ almost everywhere. {\bf More examples and comments will be given}
%%The corresponding  bilinear form $a(\cdot,\cdot):H_0^1(\Omega)\times H_0^1(\Omega)\rightarrow \mathbb{R}$ 
%vbecomes
%%\begin{equation*}\label{bilinear-n}
%%a (w,\chi) = (\kappa(x) \nabla w,\nabla \chi)+(c(x)w, \chi)\;\;\;\forall \chi\in H^1_0(\Omega).
%%\end{equation*}
%\end{remark}

% \subsection{The Stokes problem}

% \subsection{Space-time fractional equations}

% \subsection{More general elliptic operators}

% \subsection{Spectral methods}

% \subsection{Other boundary conditions (Neumann, Mixed)}

% \subsection{Quart. page 138: fourth order operator}

%%%%%%%%%%%%%%%%%%%%%%%%%%%%%%%%%%%%%%%%%%%%%%%%%%%%%%%%%%%%%%%%%%%%%%%%%%
%%%%%%%%%%%%%%%%%%%%%%%%%%%%%%%%%%%%%%%%%%%%%%%%%%%%%%%%%%%%%%%%%%%%%%%%%%
%%%%%%%%%%%%%%%%%%%%%%%%%%%%%%%%%%%%%%%%%%%%%%%%%%%%%%%%%%%%%%%%%%%%%%%%%%

\section {Applications: Mixed FE  methods} \label{sec:Mixed}
%\section {Applications: Mixed FE formulation} \label{sec:Mixed}
%\section { Mixed finite element formulation} \label{sec:Mixed}
%\se

In this section, we consider  the mixed form of the problem (\ref{a}) and establish {\it a priori} error estimates for smooth and nonsmooth initial data. To simplify the presentation, we choose $\cL=-\Delta$. By introducing the new variable $\bs=\nabla u$, the problem can be formulated as
$$
\Ba u- \nabla \cdot \bs =f, \qquad \bs=\nabla u, \qquad u=0 \; \mbox{ on } \partial\Omega,
$$
with $u(0)=u_0$. Let
$H(div;\Omega)= \{\bs\in (L^2(\Omega))^2:\nabla\cdot\bs\in L^2(\Omega) \}$
be a Hilbert space equipped with norm $\|\bs\|_{\bV} =(\|\bs\|^2+\|\nabla\cdot\bs\|
^2)^{\frac{1}{2}}$.
Then, with  $V=L^2(\Omega)$ and $\bV= H(div;\Omega)$,  the weak mixed formulation of (\ref{a}) 
is defined as follows:  find $(u,\bs):(0,T]\to  V\times \bV$ such that
%--------------
\begin{eqnarray}\label{w1-m}
(\Ba u, v)- (\nabla\cdot \bs, v) &=& (f,v) \;\;\;\forall v \in V,\\
\label{w2-m}
(\bs, \bv) +  (u,\nabla\cdot \bv) &=& 0\ \;\;\;\forall \bv \in \bV,
\end{eqnarray}
%Since the matrix $A$  is uniformly positive definite, there exist two
%positive constants $a_0$ and $a_1$ such that
%\begin{equation}\label{apd}%-------A is positive definite
% a_0 \|\bs\|\leq \|\bs\|_{A^{-1}}\le a_1\|\bs\|,~~~\mbox{where}~~
% \|\bs\|^2_{A{-1}}:=(A\bs,\bs).
%\end{equation}
with $u(0)=u_0$.  Note that the boundary condition $u=0$ on $\partial\Omega$ is implicitly contained in 
\eqref{w2-m}. By Green's formula, we formally obtain $\bs = \nabla u$ in $\Omega$ and $u=0$ on $\partial \Omega$.

Well-posedness of problem \eqref{a} is established in \cite{SY-2011} based on a spectral decomposition approach. 
%see Theorems 2.1 and 2.2 in \cite{SY-2011}. 
%We have the following regularity results: 
In particular, for $u_0\in L^2(\Omega)$ and $f=0$, it is shown that the problem \eqref{weak} has a unique weak solution 
$u\in C([0,T],L^2(\Omega))\cap C((0,T],\dot H^2(\Omega))$ with $\Ba u \in  C((0,T],L^2(\Omega))$, see \cite[Theorem 2.1]{SY-2011}. The regularity results for the inhomogeneous problem with a vanishing initial data are given in \cite[Theorem 2.2]{SY-2011}.  The well-posedness of \eqref{w1-m}-\eqref{w2-m} can then be established
using the equivalence of the weak formulation \eqref{weak} and the mixed formulation \eqref{w1-m}-\eqref{w2-m} based on the results in \cite{SY-2011}.

%The well-posedness of \eqref{w1-m}-\eqref{w2-m} can be established
%using the equivalence of the weak formulation \eqref{weak} and the mixed formulation \eqref{w1-m}-\eqref{w2-m} based on %the results in \cite{SY-2011}.

%----------------------------------------------------------
%----------------------------------------------------------
%----------------------------------------------------------

\subsection{Semidiscrete mixed FE problem}\label{sec:M1}

For the semidiscrete mixed formulation corresponding to \eqref{w1-m}-\eqref{w2-m}, let, as before, ${\mathcal T}_h$ be a shape-regular partition of the polygonal convex domain $\bar \Omega$ into triangles $K$ of diameter $h_K$. 
Further, let $V_h$ and $\bV_h$  be appropriate finite element subspaces of $V$ and $\bV$  satisfying the
Ladyzenskaya-Babuska-Brezzi (LBB) condition. 
 For example, let $V_h$ and $\bV_h$ be the Raviart-Thomas spaces \cite{RT-1977} of index $\ell\geq 0$ defined by
 $$
 V_h=\{ v\in L^2(\Omega):\;v|_{K}\in P_{\ell}(K) \;\forall K\in {\mathcal T}_h\}
 $$
 and 
 $$
 \bV_h=\{ {\bf v} \in \bH(div,\Omega):\;{\bf v}|_{K}\in RT_{\ell}(K) \;\forall K\in {\mathcal T}_h\},
 $$
 where $RT_{\ell}(K)=(P_{\ell}(K))^2+{\mathcal \mathbb x}P_{\ell}(K),$ ${\ell}\geq 0$. We note that  
 high order Raviart-Thomas elements do not lead to optimal error estimates due to the limited solution regularity.
 Hence, we  shall consider only the case $\ell=1$. 
For more examples of these spaces including 
Brezzi-Douglas-Marini spaces and Brezzi-Douglas-Fortin-Marini spaces, etc., see \cite{BF-91}.

The  corresponding semidiscrete mixed finite element approximation is to seek a pair $(u_h,\bs_h):(0,T]\to  V_h\times \bV_h$ such that
\begin{eqnarray}\label{w1a-m}
(\Ba u_h, v_h)- (\nabla\cdot \bs_h, v_h) &=& (f,v_h) \;\;\;\forall v_h \in V_h,\\
\label{w2a-m}
(\bs_h, \bv_h) +  (u_h,\nabla\cdot \bv_h) &=& 0 \;\;\;\forall \bv_h \in \bV_h,
\end{eqnarray}
% where $P_h :L^2(\Omega)\rightarrow V_h$ denotes the $L^2$-projection defined by $(P_h v-v, \chi)= 0$ for all $\chi\in %V_h.$
with $u_h(0)=u_{0h}$, where  $u_{0h}$ an appropriate approximation of $u_0$  in $V_h$.  With bases for $V_h$ and 
$\bV_h$, the matrix form of the discrete problem is
\begin{equation*}\label{eq: 11regularity property}
\begin{aligned}
A\Ba U-B\Sigma = F, &\\
B^TU+D\Sigma=0,  & \quad \mbox{ for } t>0,\quad U(0) \mbox{ given, }
\end{aligned}
\end{equation*}
where $U$ and $\Sigma$ are vectors corresponding to $u_h$ and $\bs_h$. It is easily seen that the matrices $A$ and $D$ 
are  positive  definite. Eliminating $\Sigma$, we have the system of fractional ODEs
$$
A\Ba U +BD^{-1}B^TU = F,\quad \mbox{ for } t>0,\quad U(0) \mbox{ given, }
$$
which by standard results in fractional ODE theory has a unique solution, see \cite[Chapter 3]{KST-2006} .
%Since $W_h$ and $\bV_h$ are finite dimensional, from \eqref{w2a-m} we  can eliminate $\bs_h$ in the discrete level  
%by writing it
%in terms of $u_h.$ Therefore, substituting in \eqref{w1a-m}, we obtain  a system of time-fractional ODEs.
%Existence and uniqueness can then be proved easily  and hence, we skip the proof.

For $(u,\bs)\in V\times \bV$, we define the intermediate mixed projection as the pair $(\tilde{u}_h,\tilde{\bs}_h)\in V_h\times \bV_h$ satisfying
\begin{eqnarray}\label{ppp-1}
(\nabla\cdot (\bs-\tilde{\bs}_h), v_h) &=& 0 \;\;\;\forall v_h \in V_h,\\
\label{ppp-2}
( (\bs-\tilde{\bs}_h), \bv_h) +  (u-\tilde{u}_h,\nabla\cdot \bv_h) &=& 0 \;\;\;\forall \bv_h \in \bV_h.
\end{eqnarray}
Then,  the following estimates hold, see  for instance \cite[Theorem 1.1]{JT-1981},
\begin{equation}\label{est-1}
\|u-\tilde{u}_h\|\leq C h^2\|u\|_{H^2(\Omega)},\qquad \|\bs-\tilde{\bs}_h\|\leq Ch^s\|u\|_{H^{s+1}(\Omega)}, \; s=1,2,
\end{equation}
and on quasi-uniform meshes, 
\begin{equation}\label{est-2}
\|u-\tilde{u}_h\|_{L^{\infty}(\Omega)}\leq C h^s|\ln h|\,\|u\|_{H^{s+1}(\Omega)}, \; s=1,2.
\end{equation}

In our error analysis, we shall use the following result, see \cite[Lemma 1.2]{JT-1981}.
\begin{lemma}\label{lemma} There exists a constant $C$ such that
for any pair $(\theta_h,{\bf z})\in V_h\times (L^2(\Omega))^2$ satisfying
$$
({\bf z},\bv_h)+(\theta_h,\nabla\cdot\bv_h)=0 \;\;\;\forall \bv_h \in \bV_h,
$$
we have
$$
\|\theta_h\|_{L^\infty(\Omega)}\leq C |\ln h|\,\|{\bf z}\|.
$$
\end{lemma}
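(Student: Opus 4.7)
The plan is to use a regularized Green's function duality argument. Since $\theta_h$ is piecewise polynomial on $\mathcal{T}_h$, I would write $\|\theta_h\|_{L^\infty(\Omega)} = |\theta_h(x_0)|$ for some $x_0$ lying in an element $K_0 \in \mathcal{T}_h$, and construct a smoothed discrete delta $\delta_h \in V_h$ supported in $K_0$ with the properties $(\delta_h, p) = p(x_0)$ for every $p \in P_\ell(K_0)$, $\|\delta_h\|_{L^1(\Omega)} \le C$, and $\|\delta_h\|_{L^2(\Omega)} \le C h^{-1}$. In particular, $(\delta_h, \theta_h) = \theta_h(x_0)$ since $\theta_h|_{K_0} \in P_\ell(K_0)$.

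Next I would introduce the auxiliary elliptic problem $-\Delta \phi = \delta_h$ in $\Omega$, $\phi = 0$ on $\partial \Omega$, recast in mixed form as $(\phi, \boldsymbol{\psi}) \in V \times \bV$ with $\boldsymbol{\psi} = \nabla \phi$ and $-\nabla \cdot \boldsymbol{\psi} = \delta_h$, and its mixed projection $(\tilde{\phi}_h, \tilde{\boldsymbol{\psi}}_h) \in V_h \times \bV_h$ defined by the analogue of \eqref{ppp-1}--\eqref{ppp-2}. The commuting property $(\nabla \cdot (\boldsymbol{\psi} - \tilde{\boldsymbol{\psi}}_h), v_h) = 0$ for all $v_h \in V_h$, applied with $v_h = \theta_h$ together with $-\nabla \cdot \boldsymbol{\psi} = \delta_h$, gives $(\delta_h, \theta_h) = -(\nabla \cdot \tilde{\boldsymbol{\psi}}_h, \theta_h)$. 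Selecting $\bv_h = \tilde{\boldsymbol{\psi}}_h$ in the hypothesis on $(\theta_h, {\bf z})$ yields $(\theta_h, \nabla \cdot \tilde{\boldsymbol{\psi}}_h) = -({\bf z}, \tilde{\boldsymbol{\psi}}_h)$. Combining the two identities produces
\[
\theta_h(x_0) = ({\bf z}, \tilde{\boldsymbol{\psi}}_h), \qquad |\theta_h(x_0)| \le \|{\bf z}\| \, \|\tilde{\boldsymbol{\psi}}_h\|.
\]

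The remaining task is to bound $\|\tilde{\boldsymbol{\psi}}_h\| \le C|\ln h|$. I would use the triangle inequality $\|\tilde{\boldsymbol{\psi}}_h\| \le \|\boldsymbol{\psi}\| + \|\boldsymbol{\psi} - \tilde{\boldsymbol{\psi}}_h\|$, controlling the approximation term by the standard Raviart-Thomas estimate $\|\boldsymbol{\psi} - \tilde{\boldsymbol{\psi}}_h\| \le C h \|\phi\|_{H^2(\Omega)} \le C h \|\delta_h\| \le C$ via elliptic regularity, and the continuous term by the energy identity $\|\boldsymbol{\psi}\|^2 = (\delta_h, \phi) \le \|\delta_h\|_{L^1(\Omega)} \|\phi\|_{L^\infty(\Omega)}$ combined with the classical two-dimensional bound $\|\phi\|_{L^\infty(\Omega)} \le C|\ln h|$ for the regularized Green's function.

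The hardest step, and the real source of the $|\ln h|$ factor, is the $L^\infty$ bound on $\phi$: it encodes the logarithmic singularity of the fundamental solution of $-\Delta$ in $\mathbb{R}^2$, and follows from the explicit representation of the Green's function combined with standard elliptic estimates up to the boundary of the convex polygon $\Omega$. Once this input is in place, the remainder is a routine exploitation of the Raviart-Thomas commuting diagram and the hypothesis on $(\theta_h, {\bf z})$.
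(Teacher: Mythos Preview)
The paper does not prove this lemma at all: it is simply quoted from \cite[Lemma 1.2]{JT-1981}. Your proposed argument is essentially the classical regularized Green's function duality of Johnson and Thom\'ee, and your outline is correct. One small remark: your energy step actually yields $\|\boldsymbol{\psi}\|^2 \le C\|\phi\|_{L^\infty(\Omega)} \le C|\ln h|$, hence $\|\boldsymbol{\psi}\|\le C|\ln h|^{1/2}$, which together with $\|\boldsymbol{\psi}-\tilde{\boldsymbol{\psi}}_h\|\le C$ gives $\|\tilde{\boldsymbol{\psi}}_h\|\le C|\ln h|^{1/2}$ --- slightly sharper than the stated $C|\ln h|$ but of course sufficient.
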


%----------------------------------------------------------
%----------------------------------------------------------
%----------------------------------------------------------

%{\bf Give the plan ...}

We now start deriving error estimates for smooth initial data using energy arguments. Since the problem has a limited smoothing property, integration in time with a $t$ type weight is an essential tool to provide optimal error estimates. 
%The generalized Leibniz formula play a key role in the analysis.  
This idea has been used in \cite{KMP2016} and  \cite{Mustapha2017} to derive optimal error bounds for problems \eqref{a1-n} and \eqref{a}, respectively. A similar approach applied to mixed finite element methods for parabolic problems has also been exploited  in \cite{GP-2011}.

\subsection{Error estimates with smooth initial data}\label{sec:M2}

For the error analysis, define $e_u=u_h-u$ and $e_\bs =\bs_h-\bs$. Then, from (\ref{w1-m})-(\ref{w2-m}) and (\ref{w1a-m})-(\ref{w2a-m}), 
$e_u$ and $e_\bs$ satisfy the following equations
\begin{eqnarray}\label{ee1}
(\Ba e_u, v_h)- (\nabla\cdot e_\bs, v_h) &=& 0 \;\;\;\forall v_h \in V_h,\\
\label{ee2}
( e_\bs, \bv_h) +  (e_u,\nabla\cdot \bv_h) &=& 0 \;\;\;\forall \bv_h \in \bV_h.
\end{eqnarray}
To derive  {\it a priori} error estimates for the semidiscrete FE problem (\ref{w1a-m})-(\ref{w2a-m}),  we split the errors
$$e_u:=(u_h- \tilde{u}_h)-(\tilde{u}_h- u)=:\theta-\rho,$$
$$e_\bs:=(\bs_h- \tilde{\bs}_h)-( \tilde{\bs}_h-\bs)=:\bx-\bz.$$
From (\ref{ee1})-(\ref{ee2}), we note that $\theta$  and $\bx$ satisfy 
\begin{eqnarray}\label{aa}
(\Ba e_u, v_h)- (\nabla\cdot \bx, v_h) &=& 0 \;\;\;\forall v_h \in V_h,\\
\label{bb}
(\bx, \bv_h) +  (\theta,\nabla\cdot \bv_h ) &=& 0 \;\;\;\forall  \bv_h \in \bV_h.
\end{eqnarray}

In the next lemma, we derive preliminary bounds for $e_u$ and $\bx$. To do so, we let $u_h(0)=P_h u_0$, where
 $P_h$ denotes here the $L^2$-projection of $V$ onto $V_h$. 
% {\it Some arguments are borrowed from 
% \cite{KMP2016, Mustapha2017}. Here, we follow the approach in \cite{KMP2016, Mustapha2017}, and extend the results to %the mixed form of the problem \eqref{a}}.
%$P_h :L^2(\Omega)\rightarrow V_h$ denotes 
%$P_h$ is the $L^2$-projection defined by %$(P_h v-v, \chi)= 0$ for all $\chi\in V_h.$
%
%
\begin{lemma}\label{lem:1e} %LLLLLLLLLLLLLLLLLLLLLLLLLLLLLLLLLLLLLLLLLLLLLLLL
For $0<t\leq T$, we have
\[
\int_0^t(\I^{1-\alpha}e_u,e_u)\,ds + \|\I\bx(t)\|^2
\le C\int_0^t|(\I^{1-\alpha}\rho,\rho)|\,ds.
\]
\end{lemma}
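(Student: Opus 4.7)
The plan is to time-integrate the scalar error equation \eqref{aa} so that the Caputo derivative $\Ba$ is converted into the fractional integral $\I^{1-\alpha}$, and then to pair the resulting relation with \eqref{bb} tested against the time-integrated flux error $\I\bx$. This will deliver exactly the two quantities on the left-hand side of the claimed bound, leaving a cross term in $\rho$ that can be absorbed via the continuity property \eqref{eq:p-1}.

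Concretely, I would apply $\I$ to \eqref{aa} and use the identity $\I\Ba\varphi=\I^{1-\alpha}(\varphi-\varphi(0))$, which follows from $\Ba\varphi=\RL(\varphi-\varphi(0))$ together with $\I\tfrac{d}{dt}\psi=\psi-\psi(0)$. This yields, for every $v_h\in V_h$,
\[
\bigl(\I^{1-\alpha}(e_u-e_u(0))(t),v_h\bigr)\;=\;(\nabla\cdot\I\bx(t),v_h).
\]
Since $u_h(0)=P_hu_0$, the $L^2$-orthogonality $(e_u(0),v_h)=0$ holds for all $v_h\in V_h$, so the constant-in-time contribution $\I^{1-\alpha}[e_u(0)](t)$ is $L^2$-orthogonal to $V_h$. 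The identity thus simplifies to $(\I^{1-\alpha}e_u,v_h)=(\nabla\cdot\I\bx,v_h)$ for every $v_h\in V_h$.

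I would then choose $v_h=\theta(t)\in V_h$ in this cleaned-up equation and $\bv_h=\I\bx(t)$ in \eqref{bb}; the latter belongs to $\bV_h$ because $\bV_h$ is a finite-dimensional subspace containing $\bx(s)$ for each $s$. The second choice gives $(\bx,\I\bx)+(\theta,\nabla\cdot\I\bx)=0$, and recognizing $(\bx,\I\bx)=\tfrac{1}{2}\tfrac{d}{dt}\|\I\bx\|^2$ and adding the two relations cancels the divergence terms. Splitting $\theta=e_u+\rho$ then produces
\[
(\I^{1-\alpha}e_u,e_u)+\tfrac{1}{2}\tfrac{d}{dt}\|\I\bx\|^2\;=\;-(\I^{1-\alpha}e_u,\rho).
\]
Integrating over $s\in(0,t)$ and using $\I\bx(0)=0$ yields
\[
\int_0^t(\I^{1-\alpha}e_u,e_u)\,ds+\tfrac{1}{2}\|\I\bx(t)\|^2\;=\;-\int_0^t(\I^{1-\alpha}e_u,\rho)\,ds,
\]
and a direct application of \eqref{eq:p-1} (with $\alpha$ replaced by $1-\alpha$ and $\epsilon=1/2$) dominates the right-hand side by $\tfrac{1}{2}\int_0^t(\I^{1-\alpha}e_u,e_u)\,ds+\tfrac{1}{2\alpha}\int_0^t(\I^{1-\alpha}\rho,\rho)\,ds$. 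Absorbing the first term into the left-hand side delivers the asserted inequality with $C=1/\alpha$.

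The main obstacle is the time-integration step: one must recognize that applying $\I$ to \eqref{aa} turns Caputo into $\I^{1-\alpha}$ modulo a boundary contribution proportional to $e_u(0)$, and that this boundary term is annihilated precisely because $u_h(0)=P_hu_0$. Once this clean, Caputo-free form is available, the remainder is a standard mixed-method energy argument in which $\I^{1-\alpha}$ plays the role that the identity operator plays in the classical parabolic case.
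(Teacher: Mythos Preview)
Your proposal is correct and follows essentially the same route as the paper: integrate \eqref{aa} in time to convert $\Ba$ into $\I^{1-\alpha}$ (the paper writes the boundary term as $\omega_{2-\alpha}(t)(e_u(0),v_h)$, which is just your $\I^{1-\alpha}[e_u(0)]$), drop it by $L^2$-orthogonality since $u_h(0)=P_hu_0$, test with $v_h=\theta$ and $\bv_h=\I\bx$, add, integrate, and absorb the cross term via \eqref{eq:p-1} with $\epsilon=1/2$. Your explicit constant $C=1/\alpha$ matches the paper's unnamed $C(\alpha)$.
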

\begin{proof}
Integrate (\ref{aa}) over the time interval $(0,t)$ and use the identity 
$\I^{2-\alpha}v'(t) = \I^{1-\alpha}v(t)-\omega_{2-\alpha}(t)v(0)$ to obtain
\begin{equation} \label{s1}
(\I^{1-\alpha} e_u,v_h)-(\nabla\cdot\I\bx,v_h)=
\omega_{2-\alpha}(t)(e_u(0),v_h)\quad \forall v_h \in V_h.
\end{equation}
Since $u_h(0)=P_h u_0$,  $(e_u(0),\chi)= 0$, and  Therefore
\begin{equation} \label{s2}
(\I^{1-\alpha} e_u,v_h)-(\nabla\cdot \I\bx,v_h)=0\quad \forall v_h \in V_h.
\end{equation}
%Similarly, integrate (\ref{ee2}) yields
%\begin{equation} \label{sup-14 ph}
%(\alpha \I\bx, \bv_h) +  (\nabla\cdot \bv_h, \I\theta) = 0,\quad \forall~\bv \in \bV_h.
%\end{equation}
Now choose $v_h=\theta$ in (\ref{s2}) and $\bv_h=\I\bx$ in (\ref{bb}), and add the resulting equations 
to obtain after integration
$$
\int_0^t (\I^{1-\alpha} e_u,e_u)\,ds+  \int_0^t(\bx, \I\bx)\,ds=-\int_0^t (\I^{1-\alpha} e_u,\rho)\,ds.
$$
By the continuity of the operator $\I^{1-\alpha}$ in \eqref{eq:p-1} with $\epsilon=1/2$, we see that 
$$
\left|\int_0^t (\I^{1-\alpha} e_u,\rho)\,ds\right|\leq \frac{1}{2}\int_0^t (\I^{1-\alpha} e_u,e_u)\,ds
+C(\alpha)\int_0^t (\I^{1-\alpha} \rho,\rho)\,ds.
$$
Noting that $(\bx, \I\bx)=\frac{1}{2}\frac{d}{dt}\|\I\bx\|^2$, we deduce
$$
\int_0^t (\I^{1-\alpha} e_u,e_u)\,ds+  \|\I\bx\|^2\leq C\int_0^t |(\I^{1-\alpha} \rho,\rho)|\,ds.
$$
This completes the proof. \end{proof}

In the next lemma, we derive an upper bound for  $\theta$ and $\bx$.  This bound leads to optimal convergence rates in 
the $L^2(\Omega)$-norm of $e_u$ and $e_\bs$, and a quasi-optimal convergence rate in $L^\infty(\Omega)$-norm for 
$e_u$. 
%The  initial data is assumed to be in $\dot H^\delta(\Omega)$ where $\delta \in  [1,2]$, see Theorem \ref{thm:mixed-sm}.

%
\begin{lemma}\label{lem:2e} %LLLLLLLLLLLLLLLLLLLLLLLLLLLLLLLLLLLLLLLLLLLLLLLL
For $0<t\leq T$, we have
\[
\|\theta(t)\|^2+t^\alpha\|\bx(t)\|^2 \leq
 Ct^{\alpha-2} \int_0^t \left[ \|\I^{1-\alpha}\rho\|\,\|\rho\|+\|\I^{1-\alpha}\rho_1'\|\,\|\rho_1'\|\right] \,ds.
\]
\end{lemma}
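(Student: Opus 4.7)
The strategy is to introduce a time weight of $t$ into the error equations \eqref{aa}--\eqref{bb} via the generalized Leibniz identity \eqref{Leibniz-2} and then perform an energy estimate on the weighted pair $(t\theta,t\bx)$; the weight compensates for the lack of smoothness of the solution at $t=0$ typical of fractional problems. I would take $u_{0h}=\tilde u_h(0)$ so that $\theta(0)=0$, whence $(t\theta)|_{t=0}=0$ and $\Ba(t\theta)=\I^{1-\alpha}(t\theta)_t$.

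First, multiply \eqref{aa} by $t$; invoking \eqref{Leibniz-2} on $t\Ba\theta$ and $t\Ba\rho$ (the initial-data contributions vanish since $\theta(0)=0$ and $(t\rho)(0)=0$) produces
\begin{equation*}
(\Ba(t\theta),v_h) - (\Ba(t\rho),v_h) - \alpha(\I^{1-\alpha}e_u,v_h) - (t\nabla\cdot\bx,v_h) = 0 \quad \forall v_h\in V_h.
\end{equation*}
Multiply \eqref{bb} by $t$ and differentiate in time to get $((t\bx)_t,\bv_h)+((t\theta)_t,\nabla\cdot\bv_h)=0$. Testing the first equation with $v_h=(t\theta)_t$ and the second with $\bv_h=t\bx$, the divergence terms cancel and I obtain
\begin{equation*}
(\I^{1-\alpha}(t\theta)_t,(t\theta)_t) + \frac{1}{2}\frac{d}{dt}\|t\bx\|^2 = (\Ba(t\rho),(t\theta)_t) + \alpha(\I^{1-\alpha}e_u,(t\theta)_t).
\end{equation*}

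Integrating over $(0,t)$, the LHS yields $\int_0^t(\I^{1-\alpha}(s\theta)_s,(s\theta)_s)\,ds+\tfrac12 t^2\|\bx(t)\|^2$, both terms nonnegative by the positivity of $\I^{1-\alpha}$. For the first RHS term, since $(t\rho)(0)=0$ I have $\Ba(t\rho)=\I^{1-\alpha}(t\rho)'$, so applying \eqref{eq:p-1} with $\varphi=(t\rho)'$ and $\psi=(t\theta)_t$ absorbs the $\psi$-piece into the LHS while leaving a contribution controlled by $\int_0^t\|\I^{1-\alpha}\rho_1'\|\,\|\rho_1'\|\,ds$ with $\rho_1:=t\rho$. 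For the coupling term $\alpha\int_0^t(\I^{1-\alpha}e_u,(s\theta)_s)\,ds$, apply \eqref{eq:p-1} again to absorb the $(s\theta)_s$-piece into the LHS, then invoke Lemma~\ref{lem:1e} to bound the residual $\int_0^t(\I^{1-\alpha}e_u,e_u)\,ds\le C\int_0^t|(\I^{1-\alpha}\rho,\rho)|\,ds\le C\int_0^t\|\I^{1-\alpha}\rho\|\,\|\rho\|\,ds$. Finally, applying \eqref{eq:p-2} with $\varphi=t\theta$ gives
\begin{equation*}
\int_0^t(\I^{1-\alpha}(s\theta)_s,(s\theta)_s)\,ds \geq \frac{\alpha^2}{t^{\alpha}}\|t\theta(t)\|^2 = \alpha^2 t^{2-\alpha}\|\theta(t)\|^2,
\end{equation*}
and multiplying through by $t^{\alpha-2}$ yields the claimed bound.

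The main obstacle is the $\alpha\I^{1-\alpha}e_u$ correction generated by the Leibniz step: it reinserts $\theta$ (through $e_u=\theta-\rho$) on the right-hand side in a form that cannot be absorbed into the LHS on its own. Routing it through Lemma~\ref{lem:1e} is essential to replace the $\theta$-content by the projection error $\rho$ and so close the estimate in a form depending only on $\rho$ and $\rho_1'$.
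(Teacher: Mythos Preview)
Your overall strategy---multiply \eqref{aa} by $t$, invoke the Leibniz identity, differentiate $t$ times \eqref{bb}, test with $(t\theta)_t$ and $t\bx$, then close via \eqref{eq:p-1}, Lemma~\ref{lem:1e}, and \eqref{eq:p-2}---is exactly the paper's proof. However, your treatment of the initial data is inconsistent and introduces a real gap.

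You propose taking $u_{0h}=\tilde u_h(0)$ so that $\theta(0)=0$. First, this is unnecessary: the identity $\Ba(t\theta)=\I^{1-\alpha}(t\theta)_t$ already holds because $(t\theta)(0)=0$, regardless of $\theta(0)$. Second, it is incompatible with Lemma~\ref{lem:1e}, whose proof (see \eqref{s1}--\eqref{s2}) explicitly uses $u_h(0)=P_hu_0$ so that $(e_u(0),v_h)=0$ for all $v_h\in V_h$; with your choice $(e_u(0),v_h)=(\tilde u_h(0)-u_0,v_h)$ need not vanish, and the conclusion of Lemma~\ref{lem:1e} you invoke is no longer available. Third, your claim that the initial-data contribution from the $\rho$ side vanishes ``since $(t\rho)(0)=0$'' is incorrect: the Leibniz formula \eqref{Leibniz-2} applied to $\rho$ leaves the term $t\omega_{1-\alpha}(t)\rho(0)$, which involves $\rho(0)$, not $(t\rho)(0)$, and does not vanish in general.

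The fix is simple and is precisely what the paper does: keep $u_h(0)=P_hu_0$ and apply \eqref{Leibniz-2} to $e_u$ rather than to $\theta$ and $\rho$ separately. The initial-data contribution is then $t\omega_{1-\alpha}(t)(e_u(0),v_h)$, which vanishes for every $v_h\in V_h$ by the defining property of $P_h$, yielding equation \eqref{s3} directly. With this correction the remainder of your argument coincides with the paper's proof line for line.
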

\begin{proof} 
Multiply both sides of (\ref{aa})  by $t$ and use \eqref{Leibniz-2} to find with $\theta_1=t\theta$ and  $\bx_1=t\bx$ that
\begin{equation} \label{s3}
(\I^{1-\alpha} \theta_1',v_h)-(\nabla\cdot \bx_1,v_h)= (\I^{1-\alpha} \rho_1'+\alpha\I^{1-\alpha}e_u,v_h)\quad \forall v_h \in V_h.
\end{equation}
Next  multiply both sides of  (\ref{bb}) by $t$ and   differentiate with respect to time to arrive at
\begin{equation} \label{s4}
( \bx_1', \bv_h) +  (\theta_1',\nabla\cdot \bv_h) = 0 \quad \forall\bv_h \in \bV_h.
\end{equation}
Choose $v_h=\theta_1'$ in (\ref{s3}) and $\bv_h=\bx_1$ in (\ref{s4}), then add the resulting equations 
to obtain after integration
\begin{equation} \label{s5}
\int_0^t (\I^{1-\alpha} \theta_1',\theta_1')\,ds+  \int_0^t( \bx_1', \bx_1)\,ds=
-\int_0^t (\I^{1-\alpha} \rho_1',\theta_1')\,ds+\alpha\int_0^t (\I^{1-\alpha} e_u,\theta_1')\,ds.
\end{equation}
Note that $(\bx_1', \bx_1)=\frac{1}{2}\frac{d}{dt}\|t^2\bx\|^2$. Using the continuity of the operator $\I^{1-\alpha}$ and the estimate in Lemma \ref{lem:1e}, we obtain after simplification 
\begin{equation} \label{s6}
\int_0^t (\I^{1-\alpha} \theta_1',\theta_1')\,ds+  t^2\| \bx(t)\|^2 \leq C
\int_0^t |(\I^{1-\alpha} \rho_1',\rho_1')|\,ds+C\int_0^t |(\I^{1-\alpha} \rho,\rho)|\,ds.
\end{equation}
Then, the desired estimate follows from \eqref{eq:p-2}. This concludes the proof.
\end{proof}

Using the previous lemmas, we now derive optimal  error estimates for the semidiscrete mixed finite element problem 
 with smooth initial data.  
%In the next theorem, we derive optimal convergence results   of the FE scheme \eqref{semi}
%in the $L^2$-norm  for both smooth and nonsmooth initial data $u_0$. 
%For $u_0 \in \dot H^{\delta}(\Omega)$ with $0\le \delta\le 2,$ we show that the error is bounded by $C h^2 %t^{-\alpha(2-\delta)/2}$ for each  $t\in (0,T]$. Recall that,  $\dot H^{\delta}(\Omega)=\{v\in H^{\delta}%(\Omega):~v=0~{\rm on}~\partial \Omega\}$ for $1/2<\delta\le 2$, while $\dot H^{\delta}(\Omega)=H^{\delta}%(\Omega)$ for $0\le \delta <1/2$.  Noting that, in the limiting case $\alpha \rightarrow 1^{-}$, %we recover the convergence rates for the parabolic equation $u'-{\cal{L}} u=f$.
\begin{theorem} \label{thm:mixed-sm}
 Let  $(u,\bs)$ and $(u_h,\bs_h)$ be the solutions of \eqref{w1-m}-\eqref{w2-m} and 
 \eqref{w1a-m}-\eqref{w2a-m}, respectively, with $f=0$ and  $u_{0h}=P_hu_0$. 
 Then, for $u_0 \in \dot H^\delta(\Omega)$ with $\delta \in  [1,2]$,  the following error estimates hold:
\begin{equation}\label{es1}
 \|(u_h-u)(t)\|\leq
 C h^2 t^{-\alpha(2-\delta)/2}\|u_0\|_{\dot H^\delta(\Omega)}, \quad t>0,
\end{equation}
\begin{equation}\label{es2}
\|(\bs_h-\bs)(t)\| \leq C h^2 t^{-\alpha(3-\delta)/2}\|u_0\|_{\dot H^\delta(\Omega)},\quad t>0,
\end{equation}
and with an additional quasi-uniformity condition on the mesh,
\begin{equation}\label{es3}
 \|(u_h-u)(t)\|_{L^\infty(\Omega)}\leq
 C h^2|\ln h|\, t^{-\alpha(3-\delta)/2}\|u_0\|_{\dot H^\delta(\Omega)},\quad t>0.
\end{equation}

%for $t \in (0,T]$ and $0\le \delta \le 2$.
\end{theorem}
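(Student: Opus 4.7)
The plan is to use the decomposition $e_u=\theta-\rho$ and $e_{\bs}=\bx-\bz$ from Section \ref{sec:M2}. The pair $(\theta,\bx)$ satisfies the discrete system \eqref{aa}--\eqref{bb}, and the choice $u_{0h}=P_hu_0$ gives $(e_u(0),v_h)=0$ for all $v_h\in V_h$, which is the cancellation used to pass from \eqref{s1} to \eqref{s2} in the proof of Lemma \ref{lem:2e}. Hence Lemma \ref{lem:2e} applies directly to $(\theta,\bx)$, yielding
\[
\|\theta(t)\|^2 + t^\alpha\|\bx(t)\|^2 \leq C t^{\alpha-2}\int_0^t\!\bigl[\|\I^{1-\alpha}\rho\|\,\|\rho\| + \|\I^{1-\alpha}\rho_1'\|\,\|\rho_1'\|\bigr]\,ds,
\]
with $\rho_1=t\rho$. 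The intermediate projection errors $\rho$ and $\bz$ will be handled by \eqref{est-1}--\eqref{est-2} combined with the regularity estimate \eqref{eq:reg}, after which the three stated bounds follow by the triangle inequality.

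To bound the integrand, combining \eqref{est-1} (case $s=1$) with \eqref{eq:reg} gives $\|\rho(s)\|\leq Ch^2 s^{-\alpha(2-\delta)/2}\|u_0\|_{\dot H^\delta(\Omega)}$; applying \eqref{eq:reg} with $\ell=1$ similarly bounds $s\|\rho_t(s)\|$ by the same expression, so $\|\rho_1'(s)\|=\|\rho+s\rho_t\|$ obeys the same bound. For the convolution factors the Beta integral
\[
\int_0^s(s-\tau)^{-\alpha}\tau^{-\alpha(2-\delta)/2}\,d\tau = C\,s^{1-\alpha-\alpha(2-\delta)/2}
\]
converges because $\alpha<1$ and $(2-\delta)/2\leq 1/2$ for $\delta\in[1,2]$, whence both $\|\I^{1-\alpha}\rho(s)\|$ and $\|\I^{1-\alpha}\rho_1'(s)\|$ are bounded by $Ch^2 s^{1-\alpha-\alpha(2-\delta)/2}\|u_0\|_{\dot H^\delta(\Omega)}$. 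Substituting and computing $t^{\alpha-2}\int_0^t s^{1-\alpha-\alpha(2-\delta)}\,ds = C\,t^{-\alpha(2-\delta)}$ produces
\[
\|\theta(t)\|\leq Ch^2 t^{-\alpha(2-\delta)/2}\|u_0\|_{\dot H^\delta(\Omega)},\qquad \|\bx(t)\|\leq Ch^2 t^{-\alpha(3-\delta)/2}\|u_0\|_{\dot H^\delta(\Omega)}.
\]

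Estimate \eqref{es1} then follows from the $\theta$ bound and the $\|\rho(t)\|$ bound just obtained. For \eqref{es2}, \eqref{est-1} with $s=2$ together with \eqref{eq:reg} (case $\mu=3-\delta\in[1,2]$) gives $\|\bz(t)\|\leq Ch^2 t^{-\alpha(3-\delta)/2}\|u_0\|_{\dot H^\delta(\Omega)}$; this is where the hypothesis $\delta\geq 1$ enters, ensuring that $u(t)\in\dot H^3(\Omega)$ is accessible. For \eqref{es3}, Lemma \ref{lemma} applied to \eqref{bb} yields $\|\theta(t)\|_{L^\infty(\Omega)}\leq C|\ln h|\,\|\bx(t)\|$, and \eqref{est-2} with $s=2$ gives $\|\rho(t)\|_{L^\infty(\Omega)}\leq Ch^2|\ln h|\,t^{-\alpha(3-\delta)/2}\|u_0\|_{\dot H^\delta(\Omega)}$, so the triangle inequality completes the argument.

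The principal technical step is the fractional-convolution bound: one must verify that $\I^{1-\alpha}$ applied to the singular time-profile $s^{-\alpha(2-\delta)/2}$ produces another integrable power of $s$, and that the resulting exponents compose correctly with the prefactor $t^{\alpha-2}$ to reproduce the parabolic-type scalings $t^{-\alpha(2-\delta)/2}$ and $t^{-\alpha(3-\delta)/2}$. The restriction $\delta\in[1,2]$ is exactly what keeps the convolution subcritical and simultaneously guarantees the $\dot H^3(\Omega)$-regularity needed by the $\bs$ and $L^\infty$ estimates; everything else is triangle-inequality bookkeeping.
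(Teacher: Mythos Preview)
Your proof is correct and follows essentially the same route as the paper: apply Lemma~\ref{lem:2e} to bound $\theta$ and $\bx$, estimate the right-hand side via the projection bound \eqref{est-1} and the regularity \eqref{eq:reg}, then finish each of \eqref{es1}--\eqref{es3} by the triangle inequality together with \eqref{est-1}, \eqref{est-2}, and Lemma~\ref{lemma}. Your treatment of the fractional-integral terms is slightly more explicit than the paper's (you spell out the Beta-integral computation, whereas the paper simply records the resulting power $s^{1+\alpha(\delta-3)}$), but the substance is identical; the only minor slip is that the first inequality in \eqref{est-1} carries no parameter $s$, so ``case $s=1$'' is a mislabel.
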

\begin{proof} Using the first estimate in  \eqref{est-1} and \eqref{eq:reg}, we find after integration 
that for $t\in (0,T]$,
\begin{eqnarray*}
\int_0^t \left[ \|\I^{1-\alpha}\rho\|\,\|\rho\|+\|\I^{1-\alpha}\rho_1'\|\,\|\rho_1'\|\right] \,ds
&\leq & Ch^4 \int_0^t s^{1+\alpha(\delta-3)}\,ds \|u_0\|_{\dot H^\delta(\Omega)}\\
&  = & Ch^4 t^{2+\alpha(\delta-3)} \|u_0\|_{\dot H^\delta(\Omega)}, \quad \delta \in  [1,2].
\end{eqnarray*}
Then, from Lemma \ref{lem:2e}, it follows that
$$
\|\theta(t)\|\leq Ch^2 t^{-\alpha(2-\delta)/2} \|u_0\|_{\dot H^\delta(\Omega)}, \quad \delta \in  [1,2].
$$
The bound \eqref{es1} follows now from the decomposition $u_h-u=\theta-\rho$, and the estimate of $\rho$ in \eqref{est-1}. To establish \eqref{es2}, we first note that  Lemma \ref{lem:2e} and previous estimates yield 
$$
\|\bx(t)\|\leq Ch^2 t^{-\alpha(3-\delta)/2} \|u_0\|_{\dot H^\delta(\Omega)}, \quad \delta \in  [1,2].
$$
From   \eqref{est-1} and \eqref{eq:reg}, we arrive at
$$
\|\bz(t)\|\leq C h^2 \|u(t)\|_{H^3(\Omega)} \leq  C h^2t^{-\alpha(3-\delta)/2} \|u_0\|_{\dot H^\delta(\Omega)}, \quad \delta \in  [1,2].
$$
Then, \eqref{es2} follows from the decomposition $\bs_h-\bs=\bx-\bz.$ Finally, in order to show \eqref{es3}, 
we apply Lemma \ref{lemma} to \eqref{bb}  and  obtain,  by the quasi-uniformity of $\mathcal{T}_h$,
$$
\|\theta(t)\|_{L^\infty(\Omega)}\leq C|\ln h|\,\|\bx(t)\|.
$$
Hence, by Lemma \ref{lem:2e}, 
$$
\|\theta(t)\|_{L^\infty(\Omega)}\leq Ch^2  |\ln h| t^{-\alpha(3-\delta)/2} \|u_0\|_{\dot H^\delta(\Omega)}, \quad \delta \in  [1,2].
$$
Together with the estimate \eqref{est-2}, this completes the proof of \eqref{es3}.
%$$
%\|\rho(t)\|_{L^\infty(\Omega)}\leq Ch^2 \|u(t)\| \leq Ch^2  |\ln h| t^{-\alpha(3-\delta)/2} \|u_0\|_\delta, \quad %\delta \in  [1,2],
%$$
\end{proof}

%\newpage 
%----------------------------------------------------------
%----------------------------------------------------------
%----------------------------------------------------------
\subsection{Error estimates with nonsmooth initial data}\label{sec:M3}

Our next purpose is to derive error estimates for nonsmooth initial data, i.e., for $u_0\in L^2(\Omega)$. To this end, we combine our analysis developed in Section \ref{sec:AB} with the results of the previous subsection. 
For a given function $f\in L^2(\Omega)$, let $(u_h,\bs_h)\in V_h\times \bV_h$ be the unique solution of the mixed elliptic problem 
\begin{eqnarray}\label{w1-b}
- (\nabla\cdot \bs_h, v_h) &=& (f,v_h) \;\;\;\forall  v_h \in V_h,\\
\label{w2}
(\bs_h, \bv_h) +  (u_h,\nabla\cdot \bv_h) &=& 0 \;\;\; \forall \bv_h \in \bV_h.
\end{eqnarray}
Then, we define a pair of operators $(T_h,S_h):L^2(\Omega)\to V_h\times \bV_h$ as $T_hf=u_h$ and 
$S_hf=\bs_h$. With $T:L^2(\Omega) \to H^2(\Omega)\cap H_0^1(\Omega)$ being the solution operator of the continuous problem 
\eqref{a}, the following result holds  (see \cite[Lemma 1.5]{JT-1981}):
\begin{lemma}\label{lem:TT}
The operator $T_h:L^2(\Omega)\to V_h$  defined by $T_hf=u_h$ is selfadjoint, positive semidefinite on $L^2(\Omega)$ and
positive definite on $V_h$. Further
$$
\|T_hf-Tf\|\leq Ch^2\|f\|.
$$
\end{lemma}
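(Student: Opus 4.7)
The plan is to verify the three algebraic properties of $T_h$ by direct manipulation of the mixed system \eqref{w1-b}-\eqref{w2}, and then to obtain the $O(h^2)$ error bound by viewing the system as a standard mixed approximation of the elliptic problem $-\Delta(Tf)=f$ and invoking the intermediate projection estimate \eqref{est-1}.

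First I would prove selfadjointness. Given $f,g\in L^2(\Omega)$, let $(u_h,\bs_h)=(T_hf,S_hf)$ and $(w_h,\bz_h)=(T_hg,S_hg)$ solve \eqref{w1-b}-\eqref{w2} with data $f$ and $g$, respectively. Taking $v_h=w_h\in V_h$ in the first equation of the $f$-system gives
\[
(f,w_h)=-(\nabla\cdot\bs_h,w_h),
\]
while taking $\bv_h=\bs_h\in\bV_h$ in the second equation of the $g$-system gives $(\bz_h,\bs_h)+(w_h,\nabla\cdot\bs_h)=0$. Eliminating $(w_h,\nabla\cdot\bs_h)$ yields $(f,T_hg)=(f,w_h)=(\bs_h,\bz_h)$. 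The symmetric computation with the roles of $f$ and $g$ exchanged gives $(g,T_hf)=(\bz_h,\bs_h)$, whence $(T_hf,g)=(f,T_hg)$.

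Next I would establish the positivity properties. Taking $g=f$ in the identity just derived shows
\[
(T_hf,f)=(\bs_h,\bs_h)=\|S_hf\|^2\ge 0,
\]
so $T_h$ is positive semidefinite on $L^2(\Omega)$. If in addition $f\in V_h$ and $(T_hf,f)=0$, then $S_hf=0$, and the first equation of \eqref{w1-b} reduces to $(f,v_h)=0$ for all $v_h\in V_h$; choosing $v_h=f$ forces $f=0$, so $T_h$ is positive definite on $V_h$.

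Finally, for the error bound, I would identify the continuous elliptic problem solved by $Tf$. The pair $(Tf,\nabla Tf)\in V\times\bV$ satisfies exactly \eqref{w1-m}-\eqref{w2-m} with $\Ba u$ removed, so $(T_hf,S_hf)$ is precisely the Raviart--Thomas mixed approximation, which coincides with the intermediate projection $(\tilde u_h,\tilde\bs_h)$ of $(Tf,\nabla Tf)$ defined in \eqref{ppp-1}-\eqref{ppp-2}. Standard elliptic regularity on the convex polygonal domain gives $\|Tf\|_{H^2(\Omega)}\le C\|f\|$, and applying the first bound in \eqref{est-1} yields
\[
\|T_hf-Tf\|=\|\tilde u_h-Tf\|\le Ch^2\|Tf\|_{H^2(\Omega)}\le Ch^2\|f\|,
\]
which is the desired estimate. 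The main obstacle is not computational but rather recognizing the elliptic identification that reduces the bound to an already-stated mixed FE approximation result; the algebraic self-adjointness and positivity parts are short once one pairs the two equations of the mixed system correctly.
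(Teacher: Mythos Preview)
Your proof is correct. The paper itself does not prove this lemma but simply cites \cite[Lemma~1.5]{JT-1981}; your argument is essentially the standard one from that reference. The selfadjointness and positivity are handled by the natural pairings of the mixed system, and the error bound follows by identifying $(T_hf,S_hf)$ with the intermediate mixed projection of $(Tf,\nabla Tf)$ (which works because $Tf=0$ on $\partial\Omega$ makes the second projection equation reduce to \eqref{w2}), then invoking \eqref{est-1} together with $H^2$-regularity on the convex domain.
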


We are now ready to prove the following nonsmooth data error estimates. In the proof, we need the following inverse
property:
% property:  assumption:
% (under the quasi-uniformity of the mesh): 
\begin{equation}\label{ie}
\|\nabla\cdot\bx\|\leq C h^{-1} \|\bx\|\quad \forall \bx\in \bV_h.
\end{equation}
%which holds under the quasi-uniformity of the mesh.
%
\begin{theorem}\label{thm:mixed-nsm} 
Let $(u,\bs)$ and $(u_h,\bs_h)$  be the solutions  of \eqref{w1-m}-\eqref{w2-m} and 
 \eqref{w1a-m}-\eqref{w2a-m}, respectively, with $f=0$ and  $u_{0h}=P_hu_0$. Then 
\begin{equation}\label{eq:mixed-1}
\|(u_h-u)(t)\|\leq Ch^2t^{-\alpha}\|u_0\|, \quad  t>0.
\end{equation}
%If the inverse inequality \eqref{ie} holds, then 
If the mesh is quasi-uniform,  then
\begin{equation}\label{eq:mixed-2}
\|(\bs_h-\bs)(t)\|\leq Cht^{-\alpha}\|u_0\|,\quad  t>0,
\end{equation}
and 
%and if the mesh is quasi-uniform,
\begin{equation}\label{eq:mixed-3}
\|(u_h-u)(t)\|_{L^\infty(\Omega)}\leq Ch|\ln h|t^{-\alpha}\|u_0\|,\quad  t>0.
\end{equation}
\end{theorem}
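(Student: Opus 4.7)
The plan is to reduce the mixed semidiscrete problem to the abstract framework of Section~\ref{sec:AB} via the operator $T_h$ of Lemma~\ref{lem:TT}, then invoke Lemma~\ref{lem:AB-5} and feed in the regularity of $u$ together with the bound $\|(T_h-T)f\|\le Ch^2\|f\|$. Since \eqref{w1a-m} depends on the data only through $(f,v_h)$ with $v_h\in V_h$, we have $T_h=T_hP_h$, so that $T_h$ is exactly the discrete solution operator associated with the mixed scheme, and \eqref{w1a-m}--\eqref{w2a-m} (with $f=0$) can be rewritten as $T_h\Ba u_h+u_h=0$. Subtracting the analogous continuous identity $T\Ba u+u=0$ yields
\[
T_h\Ba e(t)+e(t)=\rho(t),\qquad \rho=(T_h-T)\cL u,
\]
with $T_he(0)=T_h(P_hu_0-u_0)=0$, exactly the situation of Lemma~\ref{lem:AB-5}.

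First I would apply Lemma~\ref{lem:AB-5} to conclude
\[
\|e(t)\|\le Ct^{-1}\sup_{s\le t}\bigl(\|\tilde\rho(s)\|+s\|\rho(s)\|+s^2\|\rho_t(s)\|\bigr).
\]
Then I would use Lemma~\ref{lem:TT} and the regularity estimates of Section~\ref{sec:WRT} to bound each term: by \eqref{eq:reg-a} with $f=0$, $\|\cL u(s)\|\le Cs^{-\alpha}\|u_0\|$, and by \eqref{eq:reg} with $\ell=1$, $r=0$, $\mu=2$, $s\|\cL u_t(s)\|\le Cs^{-\alpha}\|u_0\|$. A direct calculation then shows that each of $\|\tilde\rho(s)\|$, $s\|\rho(s)\|$ and $s^2\|\rho_t(s)\|$ is bounded by $Ch^2 s^{1-\alpha}\|u_0\|$, which upon multiplication by $t^{-1}$ gives the target rate $Ch^2t^{-\alpha}\|u_0\|$ and proves \eqref{eq:mixed-1}.

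For \eqref{eq:mixed-2} and \eqref{eq:mixed-3}, I would use the splitting $u_h-u=\theta-\rho_\star$ and $\bs_h-\bs=\bx-\bz$ from Section~\ref{sec:M3} (where here $\rho_\star$ and $\bz$ denote the intermediate mixed projection errors, distinct from the $\rho$ above), controlling $\theta$ via the triangle inequality from \eqref{eq:mixed-1} together with the projection estimate \eqref{est-1} and $\|u(t)\|_{H^2}\le Ct^{-\alpha}\|u_0\|$ (from \eqref{eq:reg}), which yields $\|\theta(t)\|\le Ch^2 t^{-\alpha}\|u_0\|$. Taking $\bv_h=\bx$ in \eqref{bb} and invoking the inverse estimate \eqref{ie} on the quasi-uniform mesh gives $\|\bx\|\le Ch^{-1}\|\theta\|\le Cht^{-\alpha}\|u_0\|$, while $\|\bz\|\le Ch\|u\|_{H^2}\le Cht^{-\alpha}\|u_0\|$ by \eqref{est-1}; combining yields \eqref{eq:mixed-2}. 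Finally, Lemma~\ref{lemma} applied to \eqref{bb} gives $\|\theta\|_{L^\infty}\le C|\ln h|\,\|\bx\|$, and combining this with \eqref{est-2} for $\rho_\star$ delivers \eqref{eq:mixed-3}.

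The main obstacle is bookkeeping rather than conceptual: verifying that the mixed operator $T_h$ really satisfies $T_h=T_hP_h$, so that the hypothesis $T_he(0)=0$ of Lemma~\ref{lem:AB-5} actually holds with $u_{0h}=P_hu_0$, and then carefully tracking the two different roles played by the ``projection error'' $\rho$ (first as $(T_h-T)\cL u$ driving the abstract error equation, then as $\tilde u_h-u$ in the splitting used for the flux and $L^\infty$ bounds). Once these identifications are made, each of the three estimates follows by the routine combination of Lemma~\ref{lem:AB-5}, Lemma~\ref{lem:TT}, and the regularity estimates \eqref{eq:reg-a}--\eqref{eq:reg}.
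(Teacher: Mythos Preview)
Your proposal is correct and follows essentially the same route as the paper: rewrite the mixed scheme as $T_h\Ba u_h+u_h=0$, form the error equation $T_h\Ba e_u+e_u=(T_h-T)\cL u$ with $T_he_u(0)=0$, apply Lemma~\ref{lem:AB-5} together with Lemma~\ref{lem:TT} and the regularity \eqref{eq:reg} to obtain \eqref{eq:mixed-1}, and then recover \eqref{eq:mixed-2}--\eqref{eq:mixed-3} from the splitting $\theta,\bx$ via the inverse estimate \eqref{ie} and Lemma~\ref{lemma}. Your explicit remark that $T_h=T_hP_h$ (hence $T_he(0)=0$) and your distinction between the two different ``$\rho$'' are exactly the bookkeeping points the paper leaves implicit.
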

\begin{proof} From the definition of the operator $T_h$ above, the  semidiscrete problem may also be written as
$$T_h\Ba u_h+u_h=0, \quad t>0,\quad u_h(0)=P_hu_0.$$
Recalling the definition of the continuous operator $T$, we deduce that
%$$T\Ba u+u=0, \quad t>0,\quad u(0)=u_0,$$
$$
T_h\Ba e_u+e_u=-(T_h-T)\Delta u,\quad t>0,\quad T_he_u(0)=0.
$$
Since  $T_h$ satisfies the properties in Lemma \ref{lem:TT}, the estimate \eqref{eq:mixed-1}  follows immediately from Lemma \ref{lem:AB-5} and  the regularity result in \eqref{eq:reg}.
 In order to show  \eqref{eq:mixed-2}, we use \eqref{bb} and the inverse inequality \eqref{ie} to obtain
$$
 \|\bx\|^2\leq \|\theta\|\,\|\nabla\cdot\bx\|\leq Ch^{-1}\|\theta\|\,\|\bx\|.
$$
Since, by \eqref{eq:mixed-1} and \eqref{est-1}, $\|\theta\|\leq \|e_u\|+\|\rho\|\leq Ch^2t^{-\alpha}\|u_0\|$, it follows that
\begin{equation}\label{ac-1}
\|\bx(t)\|\leq Ch^{-1}t^{-\alpha}\|u_0\|.
\end{equation}
% In order to show  \eqref{eq:mixed-2}, we shall appeal again to Lemma \ref{lem:2e}.
%Using the first estimate \eqref{est-1} with $s=1$, we find  after integration 
%$$
%\int_0^t \left[ \|\I^{1-\alpha}\rho\|\,\|\rho\|+\|\I^{1-\alpha}\rho_1'\|\,\|\rho_1'\|\right] \,ds
%\leq  Ch^2 \|u_0\|^2 \int_0^t s^{1-2\alpha}\,ds 
%  \leq  Ch^2 t^{2-2\alpha}\|u_0\|^2.
%$$
Together with 
$$\|\bz(t)\|\leq Ch  \|u(t)\|_2\leq Ch  t^{-\alpha}\|u_0\|,$$ this establishes \eqref{eq:mixed-2}.
Finally, we derive \eqref{eq:mixed-3} by using the estimates  $\|\theta(t)\|_{L^\infty(\Omega)}\leq C|\ln h|\,\|\bx(t)\|$
and  \eqref{ac-1}. 
\end{proof}

\begin{remark}
The results in Theorems \ref{thm:mixed-sm} are optimal with respect to the polynomial degree and data regularity. 
In the limiting case  $\alpha=1$, we find the  bounds derived in \cite{JT-1981} for the parabolic problem.  The nonsmooth data error estimate \eqref{eq:mixed-1} established in Theorem \ref{thm:mixed-nsm} is also optimal, whereas the last two error bounds are not. 
This is due the  limited smoothing property of the time-fractional  equation. Note that due the presence of the
limited smoothing property, high order finite elements do not provide better error estimates in the case of nonsmmoth initial data. Finally, it is worth to mention that  the analysis of mixed methods extends to  problems on nonconvex domains.

%This is due to the presence of the  limited regularity of the exact solution.  
\end{remark}

%%%%%%%%%%%%%%%%%%%%%%%%%%%%%%%%%%%%%%%%%%%%%%%%%%%%%%%%%%%%%%%%%%%%%%%%%%%%%%%%%%%%%%%
%%%%%%%%%%%%%%%%%%%%%%%%%%%%%%%%%%%%%%%%%%%%%%%%%%%%%%%%%%%%%%%%%%%%%%%%%%%%%%%%%%%%%%%
%%%%%%%%%%%%%%%%%%%%%%%%%%%%%%%%%%%%%%%%%%%%%%%%%%%%%%%%%%%%%%%%%%%%%%%%%%%%%%%%%%%%%%%

\section{Multi-term time-fractional problem}\label{sec:multi}
%\section{ On extensions}
%\se

In this section, we briefly discuss the extension of our analysis to the following  multi-term time-fractional diffusion problem:
\begin{equation}\label{a-m}
P(\partial_t) u  + \cL u=f\; \mbox{ in } \Omega\times (0,T],\quad u(0)=u_0\; \mbox{ in }  \Omega,\quad u=0 
\; \mbox{ on } \partial \Omega\times (0,T],
\end{equation}
where  the multi-term differential operator $P(\partial_t)$ is defined by
\begin{equation*}\label{P}
P(\partial_t) = \partial_t^{\alpha}+\sum_{i=1}^m b_i\partial_t^{\alpha_i},
\end{equation*}
with $0<\alpha_m\leq\cdots\leq \alpha_1\leq \alpha<1$ being the orders of the fractional Caputo derivatives, and  $b_i>0$, $i=1,\ldots,m$. The multi-term differential operator $P(^R\partial_t)$ is defined analogously.
The model \eqref{a-m} was developed to improve the modeling accuracy of the single-term model 
\eqref{a}  for describing anomalous diffusion.
With the notation of Section \ref{sec:AB}, we consider the following initial value problem: 
\begin{equation} \label{eq:multi}
T_h P(\partial_t) e(t) +e(t) = \rho(t),\quad  t>0,
\end{equation}
with an initial data in $L^2(\Omega)$. The operator $T_h$ satisfies the conditions stated in Section \ref{sec:AB}.  
Then, we have the following result.
\begin{lemma}\label{lem:AB-7-n}
Let $e\in C([0,T],L^2(\Omega))$ satisfy \eqref{eq:multi} with $T_he(0)=0$. Then, there holds for $t>0$, 
\begin{equation*}\label{sup-multi}
\|e(t)\|\leq C t^{-1}\sup_{s\leq t} ( \|\tilde\rho(s)\|+s\|\rho(s)\|+ s^2\|\rho_t(s)\|).
\end{equation*}
\end{lemma}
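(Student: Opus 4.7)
The plan is to emulate the chain of Lemmas \ref{lem:AB-1}--\ref{lem:AB-5} for the multi-term operator, exploiting the fact that $P(\partial_t)=\partial_t^\alpha+\sum_{i=1}^m b_i\partial_t^{\alpha_i}$ is a \emph{positive} linear combination of Caputo derivatives, each of order in $(0,1)$. The two ingredients that drove the single-term analysis---the positivity $\int_0^t(T_h\I^{1-\alpha_i}\varphi',\varphi')\,ds\geq 0$ and the Leibniz identity \eqref{Leibniz-2}---both behave additively in the index $i$, so summing them against the coefficients $1,b_1,\dots,b_m$ reproduces the structure of \eqref{eq:T_h} and \eqref{eq:T_h-t} with only finitely many extra harmless terms.

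First, I would establish the analog of Lemma \ref{lem:AB-1}. Testing \eqref{eq:multi} against $e_t$ gives
\[
(T_h\I^{1-\alpha}e_t,e_t)+\sum_{i=1}^m b_i(T_h\I^{1-\alpha_i}e_t,e_t)+\tfrac{1}{2}\tfrac{d}{dt}\|e\|^2=\tfrac{d}{dt}(\rho,e)-(\rho_t,e),
\]
and integrating over $(0,t)$ drops every left-hand term by \eqref{positivity-1}, so the proof of \eqref{e-1} goes through verbatim. Testing against $e$ (respectively $T_hP(\partial_t)e$) produces the analog of \eqref{e-2}. Next, since $T_he(0)=0$ and $T_h\I\partial_t^{\alpha_i}e=T_h\I^{1-\alpha_i}e=T_h\partial_t^{\alpha_i}\tilde e$ for every $i$, integrating \eqref{eq:multi} yields
\[
T_hP(\partial_t)\tilde e+\tilde e=\tilde\rho,
\]
so Remark \ref{rem:AB-1-d} extends and in particular $\|T_h\I^{1-\alpha_i}e\|^2\leq Ct\int_0^t\|\rho\|^2\,ds$ for each $i$.

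The $t$-weighted step is the main bookkeeping point. Multiplying \eqref{eq:multi} by $t$ and applying \eqref{Leibniz-2} to every Caputo derivative, the boundary terms $t\omega_{1-\alpha_i}(t)e(0)$ are annihilated by $T_h$ and we obtain
\[
T_hP(\partial_t)(te)+te=t\rho+\alpha T_h\I^{1-\alpha}e+\sum_{i=1}^m b_i\alpha_i T_h\I^{1-\alpha_i}e,
\]
which is structurally identical to \eqref{eq:T_h-t} with one forcing term replaced by a finite sum of such terms, each already controlled by the previous step. Testing against $te$ and using \eqref{positivity-2} yields the analog of Lemma \ref{lem:AB-2}; testing against $(te)_t$ and using \eqref{positivity-1} together with the continuity property \eqref{eq:p-1} (applied once per index $i$) yields the analog of Lemma \ref{lem:AB-3}. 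Applying Lemma \ref{lem:AB-3} in turn to $T_hP(\partial_t)\tilde e+\tilde e=\tilde\rho$ controls $\|\tilde e\|$, and combining exactly as in the proof of Lemma \ref{lem:AB-4} produces the weighted bound \eqref{est}; the supremum estimate of Lemma \ref{lem:AB-7-n} then drops out as in Lemma \ref{lem:AB-5}.

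The likely obstacle is purely one of accounting. Each use of \eqref{eq:p-1} contributes a factor $(1-\alpha_i)^{-1}$, and each additive step adds $m+1$ terms; since $m$ is fixed and $\alpha_i\leq\alpha<1$ uniformly, all these factors remain bounded and can be absorbed into the constant $C$ (which is already permitted to depend on $\alpha$ and $T$). No genuinely new estimate beyond those developed in Section \ref{sec:AB} is required, which is precisely why the proof extends.
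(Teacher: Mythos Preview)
Your proposal is correct and is essentially the same argument as the paper's own proof: the paper packages your sum $\alpha T_h\I^{1-\alpha}e+\sum_{i=1}^m b_i\alpha_i T_h\I^{1-\alpha_i}e$ into a single operator $T_hQ(\I)e$ with $Q(\I)=\alpha\I^{1-\alpha}+\sum_i \alpha_ib_i\I^{1-\alpha_i}$, records the positivity properties \eqref{a-m-2} and the Leibniz identity $P(\partial_t)(t\varphi)=tP(\partial_t)\varphi+Q(\I)\varphi$, and then states that Lemmas \ref{lem:AB-1}--\ref{lem:AB-4} are to be repeated line by line with $\Ba$ replaced by $P(\partial_t)$ and $\alpha\I^{1-\alpha}$ by $Q(\I)$. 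Your invocation of the continuity property \eqref{eq:p-1} in the Lemma~\ref{lem:AB-3} step is not actually needed (the paper handles the $Q(\I)$ term via integration by parts in time exactly as in the single-term proof), but this is a harmless over-caution, not a gap.
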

\begin{proof} 
We first introduce the time-fractional integral operator $Q(\I)$  defined by
$$
Q(\I) = \alpha\I^{1-\alpha}+\sum_{i=1}^m \alpha_i b_i\I^{1-\alpha_i}.
$$
Then, results similar  to \eqref{positivity-1} and \eqref{positivity-2}, follow from the following positivity properties:
\begin{equation}\label{a-m-2}
\int_0^t (Q(\I)\varphi,\varphi)\,ds\geq 0
\quad \mbox{ and } \quad 
\int_0^t (P(^R\partial_t)\varphi,\varphi)\,ds \geq 0.
\end{equation}
%Furthermore, the continuity property in \eqref{eq:p-1} now reads:
%\begin{equation*}\label{eq:p-1}
%\int_0^t (Q(\I)\varphi,\psi)\,ds\leq \epsilon \int_0^t (Q(\I)\varphi,\varphi)\,ds +
%\frac{1}{4\epsilon \alpha_m}\int_0^t (Q(\I)\psi,\psi)\,ds,\quad \mbox{for } \epsilon>0.
%\end{equation*}
%where we have used the fact that $(1-\alpha)\leq (1-\alpha_i)$ for $i=1,\ldots,m$. 
%Finally, the generalized Leibniz formula takes the form, with $\varphi(0)=0$,
Furthermore, the generalized Leibniz formula takes the form: with $\varphi(0)=0$, 
\begin{equation}\label{Leibniz-2-m}
P(\partial_t)(t\varphi)=tP(\partial_t) \varphi +Q(\I)\varphi.
%+t\omega_{1-\alpha}(t) \varphi(0).
\end{equation}
Using \eqref{a-m-2} and \eqref{Leibniz-2-m}, we then prove Lemma \ref{lem:AB-7-n}  by following line-by-line the proofs of Lemmas \ref{lem:AB-1}-\ref{lem:AB-4} where $\Ba$ is replaced by $P(\partial_t)$ and  $\alpha \I^{1-\alpha}$ is replaced by $Q(\I)$.
\end{proof}

Regularity properties of the solution of problem \eqref{a-m} can be found in \cite{JLZ2015}. For $f=0$ and  $u_0\in \dot H^q(\Omega)$, $q\in [0,2]$, it is shown that (see \cite[Theorem 2.1]{JLZ2015})
$$
\|P(\partial_t)^\ell u(t)\|_{\dot H^p(\Omega)} \leq Ct^{-\alpha(\ell+(p-q)/2)} \|u_0\|_{\dot H^q(\Omega)},\quad t>0,
$$
where for $\ell=0$, $0\leq p-q\leq 2$ and for $\ell=1$, $-2\leq p-q\leq 0$. In addition to these results, one can 
verify that the solution of \eqref{a-m} satisfies the  regularity property stated in \eqref{eq:reg}.  
As an immediate consequence, we conclude that all the the error estimates achieved in Section \ref{sec:Appli} for problem \eqref{a} and in Section \ref{sec:Mixed} for the mixed form remain valid for the multi-term time-fractional problem \eqref{a-m} based on our analysis, with the only exception that some  minor modifications are needed in the proof of Theorem \ref{thm:mixed-sm}.
Theorem \ref{thm:L2} provides, in particular, an improvement of the nonsmooth data error estimate established in \cite[Theorem 3.2]{JLZ2015} where  an additional  log factor is involved. 

%\section{Concluding Remarks}
\section{Conclusions}
In this paper we provided a unified error analysis for semidiscrete time-fractional parabolic problems 
and derive optimal error estimates for both smooth and nonsmooth initial data. 
The analysis depends on known properties of the associated elliptic problems. Examples including spatial approximations by conforming and nonconforming Galerkin FEMs,  and by FEM on  nonconvex domains have been discussed.  Further examples, including space-time fractional parabolic equations  can  be considered.
Particularly interesting in this study, is the  mixed form which  fits within the framework of the present analysis. 
Error estimates in gradient and maximum norms  deserve further investigation. 
An interesting future research direction is the analysis of mixed finite element methods applied to the time-fractional Stokes equations.
%The analysis of mixed finite element methods applied to the time-fractional Stokes equations deserve also an %investigation.
% and will be part of our  future work.

% There are some issues that we did not address in this study, regarding 
%error estimates in maximum norm and negative norms, and superconvergence estimates. These will deserve further %investigations. 
%Mixed finite element methods for the time-fractional Stokes equations will be part of our  future work.

%\vspace*{0.6cm}
%There are further examples that we did not address in this study, such as, problems with Meumann or mixed boundary %conditions,  

\vspace*{0.4cm}

{\bf  Acknowledgements.} 
The author  thanks Prof. Amiya K. Pani for valuable comments and suggestions.
%The author would like to thank Prof. Amiya K. Pani for several useful comments and suggestions.

\end{document}